\newcommand\Tr{\operatorname{Tr}}
\newcommand\B{\mathcal{B}}
\newcommand\Hp{\B(\bord)}
\newcommand\Hm{\B'(\bord)}
\newcommand\Hcal{\mathcal{H}}
\newcommand\U{\mathcal{U}}
\newcommand\J{\mathcal{J}}
\newcommand\Ocal{\mathcal{O}}
\newcommand\T{\mathcal{T}}
\newcommand\R{\mathbb{R}}
\newcommand\N{\mathbb{N}}
\newcommand\bord{\partial\Omega}
\newcommand\nbord{\R^n\backslash\bord}
\newcommand\dx{{\rm d}x}
\newcommand\dy{{\rm d}y}
\newcommand\dt{{\rm d}t}
\newcommand\dmu{{\rm d}\mu}
\newcommand\dps{\displaystyle}
\newcommand{\ddn}[1]{\frac{\partial #1}{\partial\nu}}
\newcommand{\ddny}[2]{\frac{\partial^{#2} #1}{\partial\nu}}
\newtheorem{theorem}{Theorem}[section]
\newtheorem{corollary}[theorem]{Corollary}
\theoremstyle{definition}
\newtheorem{definition}[theorem]{Definition}
\theoremstyle{remark}
\newtheorem{remark}[theorem]{Remark}
\numberwithin{equation}{section}
\providecommand{\keywords}[1]{\textbf{Keywords:} #1}
\author{
Gabriel Claret \thanks{Laboratoire MICS, CentraleSup\'elec, Universit\'e Paris-Saclay, France (correspondence: gabriel.claret@centralesupelec.fr).} \and
Anna Rozanova-Pierrat\thanks{Laboratoire MICS, CentraleSup\'elec, Universit\'e Paris-Saclay, France
(correspondence: anna.rozanova-pierrat@centralesupelec.fr).}}
\title{Existence of optimal shapes for heat diffusions across irregular interfaces}
\date{November, 2023}
\begin{document}

\maketitle
\keywords{Heat equation, shape optimization, trace operators, extension domains, fractals, Mosco convergence.}

\begin{abstract}
We consider a heat transmission problem across an irregular interface -- that is, non-Lipschitz or fractal -- between two media (a hot one and a cold one). The interface is modelled as the support of a $d$-upper regular measure. We introduce the proprieties of the interior and exterior trace operators for two-sided extension domains, which allow to prove the well-posedness (in the sense of Hadamard) of the problem on a large class of domains, which contains regular domains, but also domains with variable boundary dimension. Then, we prove the convergence in the sense of Mosco of the energy form connected to the heat content of one of the domains and the heat transfer for $(\varepsilon,\infty)$-domains. Finally, we prove the existence of an optimal shape maximizing the heat energy transfer in a class of $(\varepsilon,\infty)$-domains, allowing fractal boundaries, while that optimum can generally not be reached in the class of Lipschitz domains.
\end{abstract}

\section{Introduction}

Optimizing the efficiency of heat transfers is a problem which often arises in the design of industrial objects, especially in the context of new technologies (for the cooling of microprocessors for example). It is linked to the heat propagation shortly after the beginning of the cooling of an object. Intuitively, the greater the area of the exchange surface, the most efficient the heat transfer (see Figs.~\ref{Fig:RegularInterface} and~\ref{Fig:IrregularInterface}, where the volumes of the initially cold and hot media are preserved, while the lengths of the interfaces increase). The idea that the shape of the interface between media has a significant impact on the speed of the diffusive heat transfer is well known from a physical perspective; for instance, it results from notable works by de Gennes~\cite{de_gennes_physique_1982}.
The heat propagation can be described by the increase of heat content of the initially cold medium. The matter of the efficiency of the heat transfer is then related with the asymptotic expansion of the heat content for small times $t$.
De Gennes~\cite{de_gennes_physique_1982} stated that the asymptotic behaviour of the heat content in an $n$-dimensional domain is proportional to the measure of the interface multiplied by $t^{(n-d)/2}$, where $d$ is the Hausdorff dimension of the interface.
The heat propagation for the small times is also directly connected to the area of the interior Minkowski sausage ($i.e.$ the $\sqrt{t}$-neighbourhood of the interface on the hot side). That argument was investigated experimentally and numerically in the case of prefractal configurations (see~\cite{mandelbrot_fractal_1983}) in~\cite{rozanova-pierrat_faster_2012}.  It was  established analytically in~\cite{bardos_short-time_2016} in the framework of regular and $d$-set boundaries for the heat transmission model between two media, described by different diffusion coefficients and an interface with finite or infinite resistivity (see~\eqref{Eq:DiffProb}).

For that matter, it is natural to consider shapes of finite volumes (due to industrial constraints) but with large or rough surfaces, which is why it appears consistent to think of objects with boundaries described by fractals~\cite{mandelbrot_how_1967}. Indeed, fractal shapes are self-similar (or scale invariant), which means they present irregularities at all scales, making the perimeter of the object infinite: the Hausdorff dimension of the boundary is non-integer.
Therefore, the short-time heat transfer is more efficient across fractal boundaries, the Hausdorff dimension of which is superior to $n-1$, which is the dimension of regular (for instance, Lipschitz) boundaries.
Figures~\ref{Fig:RegularInterface} and~\ref{Fig:IrregularInterface} illustrate that phenomenon, representing the cooling of a hot domain into a cold one through generation $0$ to $3$ Minkowski pre-fractal interfaces. In Figure~\ref{Fig:RegularInterface} (generations $0$ and $1$), the heat only spreads to the part of the cold domain sticking to the boundary. Nonetheless, the enhanced cooling near the angles of the generation $1$ curve is already visible. In line with that observation, the heat transfer appears more efficient as the interface grows more irregular: in Figure~\ref{Fig:IrregularInterface} (generations $2$ and $3$), the heat spreads further into the cold domain within the same time frame as in the previous cases, progressively reaching the overall shape of the fractal boundary rather than sticking to the interface. Consequently, the hot domain cools down more efficiently (represented by the yellow areas in the hot domains), and even more so for the generation $3$ interface.

\begin{figure}[ht]
\centering
\includegraphics[width=10cm]{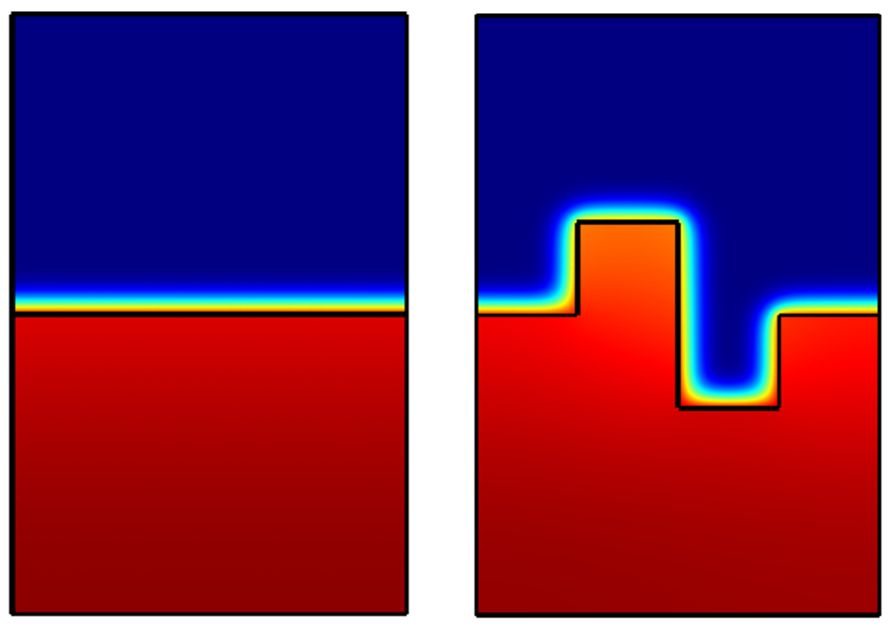}
\caption{Numerical simulation (with COMSOL Multiphysics) of the cooling of a hot medium (in red) by a cold medium (in blue) across a generation $0$ (on the left) and a generation $1$ (on the right) Minkowski pre-fractal curve. The propagation time is the same for both figures. The heat distributions satisfy homogeneous Neumann boundary conditions on the outer boundaries (a thermally insulated cavity) and the diffusion coefficients of two media are supposed to be distinct. The volume of the media is constant, due the symmetric property of Minkowski pre-fractal curve. }
\label{Fig:RegularInterface}
\end{figure}

\begin{figure}[ht]
\centering
\includegraphics[width=10cm]{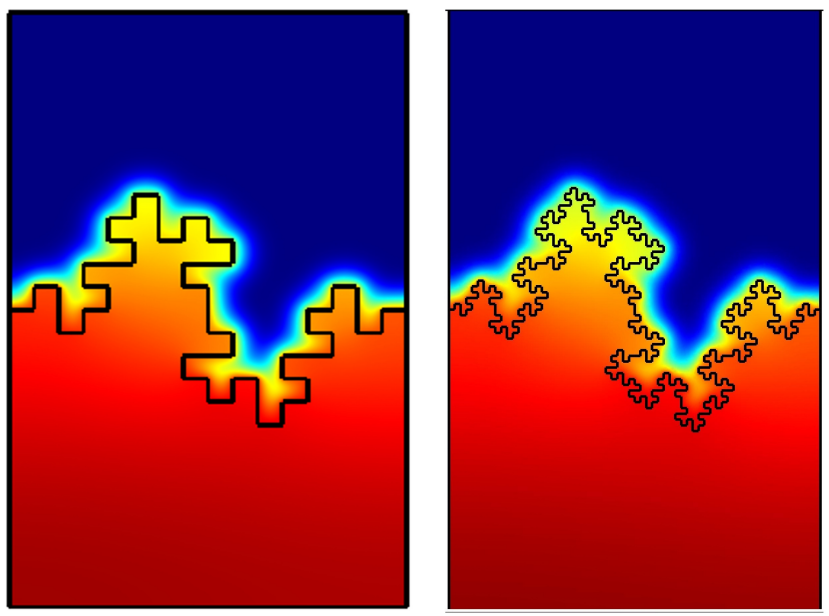}
\caption{Numerical simulation (COMSOL Multiphysics) of the cooling of a hot medium (in red) by a cold medium (in blue) across a generation 2 (on the left) and a generation 3 (on the right) Minkowski pre-fractal curve. The cooling time and boundary conditions are the same as for Figure~\ref{Fig:RegularInterface}.}
\label{Fig:IrregularInterface}
\end{figure}

There are many mathematical results on the asymptotic behaviour of the heat content in the case of one medium with a hot boundary (\textit{i.e.}, Dirichlet boundary condition): for a $C^3$ boundary~\cite{VAN_DEN_BERG-1994,VANDENBERG-1994}, for a polygonal boundary~\cite{VAN_DEN_BERG-1990} and for a triadic Von Koch snowflake~\cite{FLECKINGER-1995}. The special case of a countable disjoint union of scaled copies of a given open set in $\R^n$ is studied in~\cite{LEVITIN-1996}.
The case of a continuous metric (without discontinuity of the diffusion coefficients across the interface) on smooth compact
$n$-dimensional Riemannian manifolds with a smooth boundary was considered in~\cite{GILKEY-2003}.  The case of continuous
transmission boundary conditions for the expansion of the heat kernel
on the diagonal was studied in~\cite{PIROZHENKO-2005}. A survey of results on the asymptotic
expansion of the heat kernel for different boundary conditions can be found in~\cite{VASSILEVICH-2003}.

Although it is not the approach followed in this work, problems regarding heat diffusion have also been widely studied from a probabilistic perspective, relying on the connection between the analytical concept of Dirichlet forms and the probabilistic notion of Feller semi-group~\cite{chen_symmetric_2012}. 
More specifically, it is well known solutions to the heat equation can be expressed in terms of Brownian diffusions, see for instance~\cite[Paragraph 6]{hunt_theorems_1956} and~\cite{doob_probability_1955, rosenblatt_class_1951}. In those papers, the solution to the Dirichlet heat problem is expressed in terms of the parabolic measure. 
Given a point in the domain, the measure of a time interval and a Borel subset of the boundary can be expressed as the probability for a Brownian motion starting from that point to reach the boundary subset within the time interval (see also~\cite{wu_parabolic_1979}). 
With that approach, domain regularity is of little relevance: boundary points are split into regular points (\textit{i.e.}, starting points for Brownian motions hitting the boundary arbitrarily soon almost surely) and irregular points. The latter form a null set for the parabolic measure~\cite{doob_semimartingales_1954} and are connected to the thinness of the domain. 
Though the domain under study needs no specific regularity, the boundary conditions are seen as continuous functions. 
The analytical approach however demands some boundary regularity but allows less smooth conditions. For that matter, rather than strictly equivalent, the analytical and probabilistic approaches to the heat problem can be regarded as complementary.

Generally speaking, fractal geometries have been vastly studied for their potential industrial applications and their omnipresence in nature, see~\cite{mandelbrot_fractal_1983}, as well as~\cite{sapoval_fractal_1985} for instance. From a mathematical perspective, various types of heat models have been studied on irregular and fractal domains. In~\cite{capitanelli_fractional_2021}, the heat equation and other diffusion models are studied on random Koch snowflakes for Dirichlet, Neumann and Robin boundary conditions. 
Asymptotic results for the diffusion are obtained by approximation of the fractal shape by the associated pre-fractal sequence. In~\cite{lancia_venttsel_2014}, the existence and uniqueness of solutions to diffusion problems on Koch domains with Wentzell boundary conditions is proved. One can also think of~\cite{cefalo_fractal_2023} where the heat exchange interface consists of a so-called Koch-mixture. See~\cite{capitanelli_asymptotics_2010, lancia_second_2004, mosco_variational_2003} for other problems set on domains with Koch fractal boundaries. 
Those boundaries are examples of $d$-sets~\cite{jonsson_function_1984} for $d=\ln 4/\ln 3$, which have a fixed Hausdorff dimension (which is $d$). So far, from an analytical perspective, problems regarding heat diffusions on irregular shapes have been set on domains with at least $d$-set boundaries, see for instance~\cite{bardos_short-time_2016, creo_dynamic_2023}.  
Other boundary value problems have been studied on domains with $d$-set boundaries (see~\cite{arfi_dirichlet--neumann_2019} on the Poincaré-Steklov -- or Dirichlet-to-Neumann -- operator for Laplacian transport with Robin boundary conditions), but also boundaries with varying Hausdorff dimension.
Here, as in~\cite{hinz_existence_2021,dekkers_mixed_2022} (see also~\cite{lancia_fractals_2021} for a more detailed discussion), we use boundaries described as the support of a finite upper-regular Borel measure. 
One can think of~\cite{hinz_boundary_2023}, where Poisson-type problems are studied on domains with boundaries of bounded Hausdorff dimension, with Dirichlet, Neumann and Robin boundary conditions.
Recently, problems with Dirichlet and Neumann boundary conditions were even studied on extension domains, without a specified boundary measure~\cite{claret_functional_2023}. 
However, when the variational formulation of the problem considered contains a boundary integral (as for the heat transmission problem considered here), the choice of a boundary measure becomes crucial.

The heat transfer problem considered here can be modelled as follows. Consider a hot domain $\Omega^+$ with boundary $\bord$ and cold complement domain $\Omega^-$ (so that $\Omega^+$ and $\Omega^-$ are separated by their boundary $\bord$). The domain $\Omega^+$ is characterized by a thermal diffusivity $D^+>0$ and its complement by $D^->0$, where those coefficients are assumed to be distinct. 
The thermal resistance of the boundary is modelled by a continuous function $\Lambda:\bord\to[0,+\infty]$. 
If the heat distribution $u$ is initially unitary on $\Omega^+$ and null on $\Omega^-$ -- modelling the cooling of $\Omega^+$ by $\Omega^-$ -- then it is solution to the following problem:
\begin{equation}\label{Eq:DiffProb}
\begin{cases}
\dps\partial_t u^\pm- D^\pm \Delta u^\pm =0 &\mbox{on }]0,+\infty[\times\Omega^\pm,\\[0.5ex]
\dps D^-\ddn{u^-}=\Lambda(u^--u^+) &\mbox{on }]0,+\infty[\times\bord,\\[1.5ex]
\dps D^+ \ddn{u^+}=D^- \ddn{u^-} &\mbox{on }]0,+\infty[\times\bord,\\[1ex]
\dps u^+(0,\cdot)= 1 &\mbox{on }\Omega^+,\\[0.5ex]
u^-(0,\cdot)=0 &\mbox{on }\Omega^-.
\end{cases}
\end{equation}

To consider boundary value problems such as Problem~\eqref{Eq:DiffProb} on irregular shapes (for instance, fractals), the first concern regards the definition of the boundary conditions. On the one hand, the solution we seek is defined on $\Omega^+\cup\Omega^-$, which means it cannot simply be restricted to the boundary.
To generalize that idea, we use the trace operators. In the case of Lipschitz boundaries, that notion was defined and studied by Marschall in 1987~\cite{marschall_trace_1987} and the trace space (\textit{i.e.}, the range of the trace operators) corresponds to the Sobolev space $H^{1/2}$ on the boundary.
Shortly after, it was generalized to $d$-set boundaries (see~\cite{jonsson_function_1984, jonsson_dual_1995}) in~\cite{wallin_trace_1991}, in which case the trace space can be identified with the Besov space $B^{2,2}_\beta$, where $\beta=1-(n-d)/2$  and with $n$, the dimension of the ambient space.
Later on, the trace operator was defined for $d$-upper regular boundaries~\cite{JONSSON-2009, biegert_traces_2009, hinz_non-lipschitz_2021}, in which case it is proved the trace space is a Hilbert space, although it can no longer be identified with either one of the previous spaces. All three approaches rely on the existence of a boundary measure (Lebesgue's measure, a $d$-measure and a $d$-upper regular measure respectively) to define that operator.
In 2009, Biegert~\cite{biegert_traces_2009} relied on the extension property of $H^1$-Sobolev extension domains to define a trace operator which, in particular, does not depend on the choice of a boundary measure.
On the other hand, fractals are nowhere differentiable, which prevents from defining the normal derivatives in the usual way. For that matter, it is necessary to consider a weak definition of normal derivatives as in~\cite{lancia_transmission_2002} (see also~\cite{claret_functional_2023} on extension domains) via Green's formula, as elements of the dual of the trace space. 
Altogether, the notions of trace operators and weak normal derivatives, as well as the functional framework for domains with irregular boundaries from~\cite{chandler-wilde_sobolev_2017} allow to consider boundary value problems such as problem~\eqref{Eq:DiffProb} from a variational perspective on a broad class of domains (see Definition~\ref{Def:T-SAdDom}), including domains with fractal boundaries or with changing Hausdorff boundary dimensions.

In this article, we are interested in the existence of an optimal shape for the heat transfer problem between two media separated by a resistive boundary. The shape is optimal in the sense that it minimizes the total energy of the hot medium as the heat propagates during a fixed time $T>0$. 
In other words, we are seeking an optimal shape for the interface in some admissible class (first, in a class of uniform Lipschitz surfaces with uniformly bounded lengths and then, in a class of surfaces with uniformly bounded dimensions) to cool the initially hot medium in the most efficient way on a time interval $[0,T]$ (the problem is described by~\eqref{Eq:DiffProb} and the energy to minimize, by~\eqref{functionaltime}).
The main interest is for $T$ sufficiently small, however the results obtained hold for any fixed $T>0$. The difficulty of dealing with questions regarding the existence of optimal shapes is highlighted in~\cite{BUCUR-2005,BUCUR-2016} for problems with Robin boundary conditions, or as soon as the variational formulation displays an integral with respect to the boundary measure (due to non-homogeneous Robin or Neumann conditions). The main obstacle arises from the non-continuity of boundary perimeters (which are only lower semi-continuous~\cite[Proposition~2.3.6]{henrot_shape_2018}). 
As the shape of a boundary varies, so does its measure. The natural notion of convergence for those measures is the weak convergence. Mathematically speaking, the complexity results from the fact that the weak limit of a sequence of Hausdorff measures of co-dimension $1$ (which are the `natural' surface measures for Lipschitz boundaries) is not necessarily a Hausdorff measure itself. To overcome that problem in the framework of Lipschitz boundaries, relaxation methods have been developed using free-boundary problems with discontinuities in~\cite[Proposition~2.3.6]{henrot_shape_2018}. However, those methods do not preserve compactness properties and therefore, do not ensure the existence of an optimal shape. Historically, the framework of compact classes for the existence of optimal shapes (adapted to boundary problems with homogeneous Neumann conditions) is set by the famous results of Chenais~\cite{chenais_existence_1975}. That class of uniformly Lipschitz domains~\cite{chenais_existence_1975} is compact for the three usual types of domain convergence: in the sense of characteristic functions, compact sets and in the sense of Hausdorff. However, as for Robin-type boundary value problems, that class is not sufficient for our heat transfer problem.

That problem was recently solved in~\cite{magoules_optimal_2021,hinz_non-lipschitz_2021,hinz_existence_2021,hinz_boundary_2023}. In~\cite{magoules_optimal_2021} is proved the optimal wall in a class of Lipschitz domains for the absorption of acoustic waves is endowed with a boundary measure which is not necessarily the Hausdorff measure, while the Hausdorff measure is the only choice for such shapes for it measures the perimeter in a way which is consistent with physical observations.
It is proved in~\cite{hinz_non-lipschitz_2021} that the class of Lipschitz domains is not adapted to maximize the stability of an architectural structure for the same reason and that an optimal shape exists in a class of uniform domains. In both papers, the models under study are time-independant and set on a bounded domain alone. Here, we use a similar class of admissible shapes and adapt the method to the case of the heat diffusion problem, which is a time-dependant transmission problem.
Regarding the heat equation, the existence of a Lipschitz minimizer to the heat energy in a class of insulated Lipschitz domains with bounded volume is proved in~\cite{bucur_shape_2022}. The existence of an optimal heating boundary to maximize the heat diffusion is proved in~\cite{henrot_shape_1992} in the case of plane domains.
Still in the plane, it is proved in~\cite{trelat_optimal_2018} there exists an optimal shape to approximate a given heat distribution in a class of domains with a given number of holes and setting a homogeneous Dirichlet condition on the boundary.

The main results of this paper are the following: on a large class of domains, referred to as `two-sided admissible domains' and which can have boundaries with varying Hausdorff dimension, we introduce interior and exterior trace operators. We characterize them in terms of isometries with the Sobolev space $H^1$. We also identify their kernels as the closures of the sets of real-analytic functions with compact supports, as in the regular case (Theorem~\ref{Trisom}). We define interior and exterior weak normal derivatives in the same spirit as what is done in~\cite{lancia_transmission_2002} on $d$-sets. In the framework of our two-sided admissible domains, we prove the well-posedness (in the sense of Hadamard) of a generalized version of problem~\eqref{Eq:DiffProb}, adding a source term, considering general initial conditions and without assuming the thermal diffusivity is constant inside or outside (Theorem~\ref{Th:WP}). 
We introduce the energy form corresponding to the total heat content of the domain $\Omega^+$ throughout the diffusion and the heat transfer from $\Omega^+$ to $\Omega^-$, and prove the convergence of that energy form in the sense of Mosco as we approximate $(\varepsilon,\infty)$-domains~\cite{jonsson_function_1984} (Theorem~\ref{Th:Mosco}). Finally, we prove the class of Lipschitz domains is not adapted to optimize the heat transfer (Theorem~\ref{Th:ShapeOptLip}): optimal shapes should be endowed with boundary measures which are not necessarily Hausdorff measures. 
For that matter, we extend the class of admissible domains to a class of $(\varepsilon,\infty)$-domains (or uniform domains), in which we prove the existence of an optimal shape in Theorem~\ref{Th:ShapeOptUnif}.

This paper is organised as follows: in Section~\ref{Sec:Framework-CV} we introduce the class of two-sided extension domains, the trace operators and the weak normal derivatives (Subsection~\ref{Sec:Framework}). We prove the isometric properties of the trace. In Subsection~\ref{Sec:WP}, we state a generalized version of problem~\eqref{Eq:DiffProb} and prove its well-posedness. In Section~\ref{Sec:CV-Opt}, we prove the Mosco convergence of the heat energy form (Subsection~\ref{Sec:Mosco}). Then, in Subsection~\ref{Sec:Opt}, we consider the shape optimization problem: we start with seeking an optimal shape in a class of Lipschitz domains and then turn to a class of $(\varepsilon,\infty)$-domains.

Throughout this paper, $n\ge 2$ is the dimension of the ambient space. If $A$ and $B$ are topological spaces, $C(A,B)$ denotes the set of all continuous functions mapping $A$ to $B$. All domains considered coincide with the interior of their closure.

\section{Framework and well-posedness}\label{Sec:Framework-CV}

\subsection{Functional framework}\label{Sec:Framework}

We begin with defining the general class of domains on which our study is carried out.

\begin{definition}[Two-sided admissible domains]\label{Def:T-SAdDom}
Let $\Omega$ be a bounded domain of $\R^n$ with connected boundary, such that the Hausdorff dimension of its boundary $\bord$ is (locally) included in $[n- 1, n[$.  Let $\mu$ be a positive finite Borel
measure with $\operatorname{supp}\mu=\bord$. The couple $(\Omega,\mu)$ is called a two-sided admissible domain if
\begin{enumerate}
\item[(i)] $\Omega^+=\Omega$ and its complement domain $\Omega^-:=\R^n\backslash\overline{\Omega}$ are $H^1$-extension domains of $\R^n$~\cite{hajlasz_sobolev_2008}:
there exist bounded linear extension operators $\mathrm{E}^\pm : H^1(\Omega^\pm)\to H^1(\mathbb{R}^n)$ with norms depending only on $\Omega$ (and $n$);
\item[(ii)]  the boundary measure $\mu$ is $d$-upper regular
for a fixed $d\in [n-1,n[$: \\ there exists $c_d>0$ such that
\begin{equation}\label{Eqmu}
\forall x\in\bord,\; \forall r\in]0,1],\quad\mu(B_r(x))\le c_d\,r^d,
\end{equation}
\end{enumerate}
where $B_r(x)$ is the open ball of $\R^n$ of center $x$ and radius $r$.
\end{definition}
Typical examples of two-sided admissible domains are NTA domains~\cite{nystrom_smoothness_1994, nystrom_integrability_1996}, $(\varepsilon,\infty)$-domains in $\R^2$~\cite{jones_quasiconformal_1981} and Lipschitz domains~\cite{calderon_lebesgue_1961, stein_singular_1970}. Condition~\eqref{Eqmu} implies that the Hausdorff dimension of the boundary is bounded below by $d$. However, unlike for $d$-sets, that dimension can vary, which allows to consider domains with boundaries which are partly fractal and partly Lipschitz for example (see Figure~\ref{Fig:SemiVK}). For Lipschitz boundaries, usual choices are $\mu=\lambda^{(n-1)}$, the $(n-1)$-dimensional Lebesgue measure and $\mu=\Hcal^{(n-1)}$, the $(n-1)$-dimensional Hausdorff measure.
\begin{figure}[ht]
\centering
\includegraphics[scale=0.5]{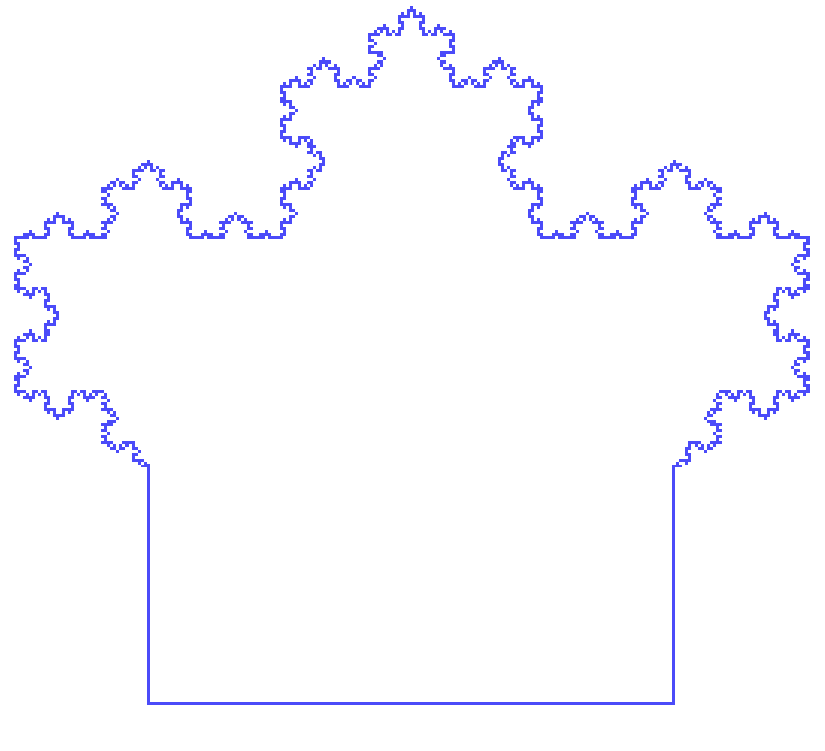}
\caption{An example of a two-sided  admissible domain of $\R^2$, lying inside the blue line. The upper part of the boundary is a Von Koch curve, which is a $\frac{\ln 4}{\ln 3}$-set (represented by a fifth generation pre-fractal {\small[source: Wikipedia]}). The lower part consists of a portion of a square, which is Lipschitz. The boundary is endowed with the 1-dimensional Lebesgue measure on the lower part and the $\frac{\ln 4}{\ln 3}$-dimensional Hausdorff measure on the upper part: condition~\eqref{Eqmu} is satisfied for $d=1$. }
\label{Fig:SemiVK}
\end{figure}

By~\cite[Theorem 5]{hajlasz_sobolev_2008}, condition (i) from Definition~\ref{Def:T-SAdDom} implies that $\Omega^\pm$ are $n$-sets ($d$-sets for $d=n$). Roughly speaking, that means the domain $\Omega$ should have neither inward nor outward cusps (see Figure~\ref{Fig:NonAdmissible}). 

\begin{figure}[ht]
\centering
\includegraphics[scale=0.9]{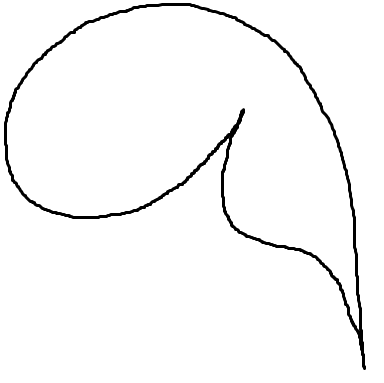}
\caption{An example of a domain $\Omega$ which is not a two-sided extension domain due to the presence of an inward and an outward cusp: neither $\Omega^+$ nor $\Omega^-$ is an $H^1$-extension domain.}
\label{Fig:NonAdmissible}
\end{figure}

In this section, whenever $(\Omega,\mu)$ is a two-sided admissible domain of $\R^n$, we use the following notations:
\begin{itemize}
\item[\raisebox{2pt}{\tiny $\bullet$}] $\Omega^+:=\Omega$ is thought of as the hot domain and its complement domain $\Omega^-:=\R^n\backslash\overline{\Omega}$ is thought of as cold;
\item[\raisebox{2pt}{\tiny $\bullet$}] if $u:(t,x)\in\, ]0,+\infty[\times\R^n\longmapsto u(t,x)\in\R$, then $u^\pm:=u|_{(]0,+\infty[\times\Omega^\pm)}$ are its restrictions to $\Omega^+$ and $\Omega^-$ respectively.
\end{itemize}

The $H^1$-extension property of $\Omega^\pm$ ensures the existence of bounded linear interior and exterior trace operators from $H^1(\Omega^+)$ and $H^1(\Omega^-)$ respectively (as in~\cite{claret_functional_2023}).
The interior trace operator is defined in the same way as in~\cite[Section 5]{biegert_traces_2009} (see also \cite{hinz_non-lipschitz_2021}). Note that its definition does not depend on the way elements of $H^1(\Omega^+)$ are extended to $H^1(\R^n)$~\cite[Theorem 6.1]{biegert_traces_2009}. Therefore, as in the $d$-regular case~\cite[Theorem 1]{wallin_trace_1991}, we use the following equivalent definition:
\begin{definition}[Trace operators]\label{DefTraceExt}
Let $(\Omega,\mu)$ be a two-sided admissible domain of $\R^n$.

\noindent \raisebox{2pt}{\tiny $\bullet$}\; The interior trace operator $\Tr^+:H^1(\Omega^+)\to \Tr^+(H^1(\Omega^+))\subset L^2(\bord,\mu)$ is defined by
\begin{equation}\label{E:pointwiseredef}
\Tr^+u(x)=\displaystyle\lim_{r\to 0^+}\frac{1}{\lambda^{(n)}(\Omega^+\cap B_r(x))}\int_{\Omega^+\cap B_r(x)}{u(y)\,\dy},\quad x\in \bord \mbox{ q.e.},
\end{equation}
where $\lambda^{(n)}$ denotes Lebesgue's measure on $\R^n$. 

\noindent \raisebox{2pt}{\tiny $\bullet$}\; The exterior trace operator $\Tr^-:H^1(\Omega^-)\to \Tr^-(H^1(\Omega^-))\subset L^2(\bord,\mu)$ is defined by
\begin{equation}\label{EqDefTrmin}
\Tr^-u(x)=\displaystyle\lim_{r\to 0^+}\frac{1}{\lambda^{(n)}(\Omega^-\cap B_r(x))}\int_{\Omega^-\cap B_r(x)}{u(y)\,\dy}, \quad x\in\bord\mbox{ q.e.}
\end{equation}
\end{definition}

In that definition, the term quasi-everywhere (q.e.) is understood in terms of the capacity $\mathrm{Cap}_2$ from~\cite[Section 2]{biegert_traces_2009}. Unlike the space $L^2(\bord,\mu)$, the range of the trace operators does not depend on $\mu$. As for~\cite[Theorem 5.1]{hinz_existence_2021}, it follows from~\cite[Corollaries 7.3 and 7.4]{biegert_traces_2009} and the fact that the measure on $\bord$ is finite (see~\cite[Theorems 7.2.2 and 7.3.2]{adams_function_1996}) that the limits~\eqref{E:pointwiseredef} and~\eqref{EqDefTrmin} exist $\mu$-a.e.

The following theorem summarizes the main properties of the trace operators and a characterization of their range (see~\cite{claret_functional_2023}).

\begin{theorem}[Trace theorem]\label{Trisom}
Let $(\Omega,\mu)$ be a two-sided admissible domain of $\R^n$. Then, the following assertions hold.
\begin{enumerate}
\item\label{Trisom0} The trace operators $\Tr^\pm:H^1(\Omega^\pm)\to L^2(\bord,\mu)$ are linear and compact. 
\item\label{Trisom1} The image of the trace operators
\begin{equation*}
\B(\bord):=\Tr^+(H^1(\Omega^+))=\Tr^-(H^1(\Omega^-))
\end{equation*}
is a Hilbert space endowed with either one of the equivalent norms:
\begin{equation}\label{normTr}
\left\|f\right\|_{\Tr^\pm}:=\min\{ \left\|v\right\|_{H^1(\Omega^\pm)}|\ f=\mathrm{Tr}^\pm\:v\}.
\end{equation}
The embedding $\B(\bord)\subset L^2(\bord,\mu)$ is dense and compact. In what follows, we denote $\|\cdot\|_{\Hp}:=\|\cdot\|_{\Tr^+}$. 
\item\label{Trisom2} $\operatorname{Ker}(\Tr^\pm)=H^1_0(\Omega^\pm):=\overline{C^\infty_0(\Omega^\pm)}^{\|\cdot\|_{H^1(\Omega^\pm)}}$ is the closure in $H^1$ of the set of indefinitely differentiable functions with compact support in $\Omega^\pm$.
\item\label{Trisom3} The trace operators $\Tr^\pm: H^1(\Omega^\pm) \to (\Hp,\|\cdot\|_{\Tr^\pm})$ are  partial isometries with operator norms equal to $1$.
\item\label{Trisom4} $\Tr^\pm: V_1(\Omega^\pm) \to \Hp$ define isometries where 
\begin{equation}\label{V1}
V_1(\Omega^\pm)=\big\{v\in H^1(\Omega^\pm)\;\big|\; (-\Delta+1)v=0 \mbox{ weakly}\big\}
\end{equation}
are the spaces of $1$-harmonic functions on $\Omega^\pm$ endowed with the standard $H^1$ norms. It holds:
\begin{equation}\label{EqV1andOthers}
V_1(\Omega^\pm)=\operatorname{Ker}(\Tr^\pm)^\perp =(H^1_0(\Omega^\pm))^\perp.
\end{equation}
\end{enumerate}
\end{theorem}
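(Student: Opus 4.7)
The plan is to reduce every assertion to the $H^1$-extension property and standard Hilbert-space quotient arguments, treating $\Omega^+$ and $\Omega^-$ in parallel. I would prove the claims in the order (1)$\to$(3)$\to$(2)$\to$(5)$\to$(4), since the Hilbert structure of $\Hp$ and the isometry statements drop out once the kernel of $\Tr^\pm$ is pinned down. For (1), linearity is immediate from the defining formulas, and compactness follows by factoring $\Tr^\pm$ through the extension $\mathrm{E}^\pm$ into $H^1(\R^n)\to L^2(\bord,\mu)$; the latter is compact because $\mu$ is a finite $d$-upper regular Borel measure with $d\in[n-1,n[$, via the classical Adams--Hedberg capacity-mass inequality combined with Rellich--Kondrachov on a bounded neighbourhood of $\bord$.

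For (3), one direction is immediate: the volume averages in \eqref{E:pointwiseredef}--\eqref{EqDefTrmin} vanish on $\phi\in C^\infty_0(\Omega^\pm)$, and continuity from (1) passes this to the $H^1$-closure. For the converse, the key analytic fact is that a function $u\in H^1(\Omega^\pm)$ with vanishing trace extends by $0$ to a function in $H^1(\R^n)$. I would establish this from the $\mathrm{Cap}_2$-quasi-continuity of $\mathrm{E}^\pm u$ together with the quasi-everywhere vanishing of the trace. A mollification-and-cutoff argument then supplies a $C^\infty_0(\Omega^\pm)$-approximating sequence converging to $u$ in $H^1(\Omega^\pm)$.

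With (3) in hand, $H^1(\Omega^\pm)/H^1_0(\Omega^\pm)$ is Hilbert, and $\Tr^\pm$ induces an isometry onto its range equipped with $\|\cdot\|_{\Tr^\pm}$. The two ranges coincide: given $f=\Tr^+u^+$, set $u^-:=(\mathrm{E}^+u^+)|_{\Omega^-}$; since $\Omega^\pm$ are both $n$-sets, the interior and exterior Lebesgue averages of the global $H^1(\R^n)$-function $\mathrm{E}^+u^+$ agree $\mu$-a.e. on $\bord$, giving $\Tr^-u^-=f$. Equivalence of the two norms follows from the boundedness of $\mathrm{E}^\pm$. Density of $\Hp$ in $L^2(\bord,\mu)$ comes from restrictions of $C^\infty_c(\R^n)$-functions and Stone--Weierstrass on the compact set $\bord$, while compactness of that embedding is a byproduct of (1). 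For (5), testing $(-\Delta+1)v=0$ against $\phi\in C^\infty_0(\Omega^\pm)$ is precisely the orthogonality condition $(v,\phi)_{H^1(\Omega^\pm)}=0$, so (3) and density yield $V_1(\Omega^\pm)=H^1_0(\Omega^\pm)^\perp=\operatorname{Ker}(\Tr^\pm)^\perp$. For each $f\in\Hp$, Lax--Milgram furnishes a unique $v\in V_1(\Omega^\pm)$ with $\Tr^\pm v=f$ which realizes the minimum in \eqref{normTr} as the orthogonal projection onto $\operatorname{Ker}(\Tr^\pm)^\perp$. This gives the isometry in (5) and the partial-isometry statement (4).

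The main obstacle is the zero-extension step in (3). For Lipschitz boundaries it follows from local flattening, but here $\bord$ may be fractal with spatially varying Hausdorff dimension, so no such chart is available. Everything hinges on the interplay between the $H^1$-extension property and $\mathrm{Cap}_2$-quasi-continuity of extended functions, which is also what controls the $\mu$-a.e. matching of interior and exterior Lebesgue averages needed for the range equality in (2).
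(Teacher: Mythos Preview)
Your proposal is correct and follows essentially the same route as the paper: both arguments hinge on the $H^1$-extension property, the capacity characterization of $H^1_0(\Omega^\pm)$ (the paper invokes the Fukushima--Oshima--Takeda identity $H^1_0(\R^n\setminus\bord)=\{v\in H^1(\R^n)\mid \tilde v=0$ q.e.~on $\bord\}$, which is exactly your zero-extension-via-quasi-continuity step), and the Dirichlet problem for $-\Delta+1$ to produce the minimal-norm preimage. The only cosmetic differences are that the paper phrases the minimizer's existence via Stampacchia's theorem on the affine set $H^1_f(\Omega^\pm)$ rather than via orthogonal projection onto $\operatorname{Ker}(\Tr^\pm)^\perp$, and defers compactness and the range equality $\Tr^+(H^1(\Omega^+))=\Tr^-(H^1(\Omega^-))$ to cited references where you sketch them directly.
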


\begin{proof}
Point \eqref{Trisom1} follows partially from the definition, see also~\cite[Theorem~1]{hinz_existence_2021},\cite[Theorem~5.1]{hinz_non-lipschitz_2021},~\cite[Theorem~2]{dekkers_mixed_2022}, where the norm was defined as:
$$\left\|f\right\|_{\operatorname{Tr}^+}=\inf\{\left\|v\right\|_{H^1(\Omega^+)}|\ f=\mathrm{Tr}^+\:v\}.$$
Here, we specify that the infimum is reached. To prove that, it is sufficient to use the continuity of the trace operator from $H^1(\Omega)$ to its image and Stampacchia's theorem~\cite[Theorem V.6]{brezis_analyse_1987}, ensuring the existence of $u\in H^1_f(\Omega)$, where 
\begin{equation}\label{H1f}
H^1_f(\Omega^+):=\left\{v\in H^1(\Omega^+)\;\big|\;\Tr^+v=f \hbox{ on } \bord \right\},
\end{equation}
unique minimizer of $\|\cdot\|_{H^1(\Omega^+)}$ on $H^1_f(\Omega^+)$. It is equivalent to the weak well-posedness on $H^1(\Omega^+)$ of the non-homogeneous Dirichlet boundary value problem for $(-\Delta+1)$: 
\begin{equation}\label{DO}
\begin{cases}
-\Delta u+u=0&\mbox{on }\Omega^+,\\
\Tr^+u=f.
\end{cases}
\end{equation}
The weak solution to problem~\eqref{DO} is understood in the sense of its variational formulation
 on $H^1_f(\Omega^+)$:
\begin{equation}\label{DOvaralt}
\forall v\in H^1_f(\Omega^+),\quad \langle u,v\rangle_{H^1(\Omega^+)}=\|u\|^2_{H^1(\Omega^+)},
\end{equation}
which, in particular, implies:
\begin{equation}\label{EqEstH1}
\forall v\in H^1_f(\Omega^+),\quad\Big(\|v\|_{H^1(\Omega^+)}=\|f\|_{\Hp}\quad \iff\quad u=v\Big).
\end{equation}
To prove Point \eqref{Trisom2}, we use the following characterization from~\cite[Corollary 2.3.1 and Example 2.3.1]{fukushima_dirichlet_2010}:
\begin{equation*}
H^1_0(\nbord)=\big\{v\in H^1(\R^n)\;\big|\;\tilde{v}=0\mbox{ q.e. on }\bord\big\}.
\end{equation*}
$u\in H^1(\Omega^+)$ can be extended to $H^1_0(\nbord)$ if and only if $u\in H^1_0(\Omega^+)$, and it can be extended to the space on the right hand side of the equality if and only if $\Tr^+u=0$.
Points \eqref{Trisom3} and \eqref{Trisom4} are corollaries of the previous points, of 
Stampacchia's theorem in particular. Let us prove~\eqref{EqV1andOthers}.
For $u\in H^1(\Omega^+)$, by Green's formula (see Definition~\ref{def:ddn} below), the following equivalences hold:
\begin{align*}
u\in V_1(\Omega^+) &\iff -\Delta u+u=0\mbox{ weakly on }\Omega^+\\
&\iff \forall v\in \operatorname{Ker}(\Tr^+),\quad\langle u,v\rangle_{H^1(\Omega^+)}=0\\
&\iff u\in \operatorname{Ker}(\Tr^+)^\perp.
\end{align*}
Since $\Tr^+: \operatorname{Ker}(\Tr_i)^\perp \to \Hp$ is bijective, by~\eqref{EqEstH1}, it is an isometry. Therefore, since $\operatorname{Ker}(\Tr^+)^\perp\subset H^1(\Omega^+)$ is a closed Hilbert space endowed with $\|\cdot\|_{H^1}$ (as an orthogonal complement), the trace operator $\Tr^+: H^1(\Omega^+)\to \Hp$ is a partial isometry of operator norm equal to $1$. The results regarding $\Tr^-$ are proved in the same way.
\end{proof}

\begin{remark}
As mentioned before, the trace space $\B(\bord)$ does not depend on the boundary measure $\mu$.
If $\bord$ is Lipschitz, then $\B(\bord)=H^{1/2}(\bord)$ in the sense of equivalent norms.
If $\bord$ is a $d$-set for some $n-2<d<n$, then~\cite{jonsson_function_1984,wallin_trace_1991}  $\B(\bord)=B^{2,2}_\beta(\bord)$ with $\beta=1-\frac{n-d}{2}>0$.
\end{remark}

To understand problem~\eqref{Eq:DiffProb}, we also need to define normal derivatives on $\Omega$. The class of two-sided admissible domains contains fractal shapes, which are known for being nowhere differentiable. For that matter, we cannot use the usual definition, which relies on the existence of a normal vector. For that matter, we use the weak definition of normal derivatives as elements of the dual of the trace space, similarly to what is done in~\cite{lancia_transmission_2002}.

\begin{definition}[Weak normal derivatives]\label{def:ddn}
Consider a two-sided admissible domain $(\Omega,\mu)$ of $\R^n$.

\noindent \raisebox{2pt}{\tiny $\bullet$}\; Let $u\in H^1_\Delta(\Omega^+)$, where
\begin{equation}\label{Eq:H1D}
H^1_\Delta(\Omega^+):=\big\{v\in H^1(\Omega^+)\;\big|\;\Delta v\in L^2(\Omega^+)\big\}.
\end{equation}
The weak interior normal derivative of $u$ is the element $\varphi\in\Hm$ such that:
\begin{equation}
\forall v\in H^1(\Omega^+),\quad \left\langle\varphi, \Tr^+v\right\rangle_{\B'\!,\,\B}=\int_{\Omega^+}{(\Delta u)v\,\dx}+\int_{\Omega^+}{\nabla u\cdot\nabla v\,\dx},
\end{equation}
denoted by $\dps\ddny{u}+:=\varphi$.\\[0.5ex]

\noindent \raisebox{2pt}{\tiny $\bullet$}\; Let $u\in H^1_\Delta(\Omega^-)$ (\eqref{Eq:H1D} with $\Omega^-$ instead of $\Omega^+$). The weak exterior normal derivative of $u$ is the element $\psi\in\Hm$ such that:
\begin{equation}
\forall v\in H^1(\Omega^-),\quad \left\langle\psi, \Tr^-v\right\rangle_{\B'\!,\,\B}=-\int_{\Omega^-}{(\Delta u)v\,\dx}-\int_{\Omega^-}{\nabla u\cdot\nabla v\,\dx},
\end{equation}
denoted by $\dps\ddny{u}-:=\psi$.
\end{definition}

The sign convention in the definition of the weak exterior normal derivative corresponds, in the regular case, to choosing the outward normal vector to $\Omega^+$ to define both derivatives. The boundary conditions in the heat transfer equation from problem~\eqref{Eq:DiffProb} are boundary jumps, defined as follows in the case of two-sided admissible domains.

\begin{definition}[Boundary jump values]
Consider a two-sided admissible domain $(\Omega,\mu)$ of $\R^n$ and a thermal diffusivity $D\in \T(\R^n)$, where
\begin{multline}
\T(\R^n):=\Big\{D\in L^\infty(\R^n)\;\Big|\;\underset{\R^n}{\mathrm{ess}\inf} D>0 \quad \mbox{ and }\\
\exists V \mbox{ neighbourhood of }\bord,\; D|_{V\backslash\bord}\in C(V\backslash\bord,]0,+\infty[)\Big\}.
\end{multline}
Let $u\in V(\R^n)$, where
\begin{equation}
V(\R^n):=\left\{v\in L^2(\R^n)\;\left|\;v^+\in H^1(\Omega^+),\;v^-\in H^1(\Omega^-)\right.\right\},
\end{equation}
endowed with the norm:
\begin{equation}\label{Eq:NormV}
\|\cdot\|_{V(\R^n)}:=\sqrt{\|\cdot\|^2_{L^2(\R^n)}+\|D\nabla\cdot\|_{L^2(\Omega^+\cup\Omega^-)^n}^2}.
\end{equation}
The jump in trace of $u$ across $\bord$ is defined by:
\begin{equation*}
\llbracket\Tr u\rrbracket := \Tr^+ u-\Tr^-u.
\end{equation*}

In addition, if $u^+\in H^1_\Delta(\Omega^+)$ (Eq.~\eqref{Eq:H1D}) and $u^-\in H^1_\Delta(\Omega^-)$, then the jump in co-normal derivative of $u$ is denoted by:
\begin{equation*}
\left\llbracket D\ddn{u}\right\rrbracket := D^+\ddny{u}+-D^-\ddny{u}-, 
\end{equation*}
where, for $x\in\bord$, $\dps D^\pm(x)=\lim_{\begin{smallmatrix}y\to x\\y\in\Omega^\pm\end{smallmatrix}}D(y)$.
\end{definition}

Let $(\Omega,\mu)$ be a two-sided admissible domain of $\R^n$. We consider the following heat transmission problem:
\begin{equation}\label{Eq:Problem}
\begin{cases}
\dps\partial_t u-D \Delta u =f &\mbox{on }]0,+\infty[\times(\nbord),\\[0.5ex]
\dps D^-\ddny{u}{-}+\Lambda\llbracket\Tr u\rrbracket=0 &\mbox{on }]0,+\infty[\times\bord,\\[1.5ex]
\!\dps\left\llbracket D\ddn{u}\right\rrbracket=0 &\mbox{on }]0,+\infty[\times\bord,\\[1ex]
u(0,\cdot)=u_0 &\mbox{on }\nbord,
\end{cases}
\end{equation}
where $u_0\in L^2(\R^n)$, $f\in L^2(]0,+\infty[,L^2(\R^n))$, $D\in \T(\R^n)$ and $\Lambda\in C(\bord,[0,+\infty])$. Those quantities can be interpreted as follows:
\begin{itemize}
\item[\raisebox{2pt}{\tiny $\bullet$}] $u$ is the (time-dependent) heat distribution on $\R^n$;
\item[\raisebox{2pt}{\tiny $\bullet$}] $u_0\in L^2(\R^n)$ is the initial heat distribution. In the example of problem~\eqref{Eq:DiffProb}, $u_0=\mathds{1}_{\Omega^+}$: the domain $\Omega^+$ is initially hot, while its complement is cold, and the problem models the heat diffusion from $\Omega^+$ to $\Omega^-$;
\item[\raisebox{2pt}{\tiny $\bullet$}] $f\in L^2(]0,+\infty[,L^2(\R^n))$ is the (time-dependent) heat source on $\R^n$;
\item[\raisebox{2pt}{\tiny $\bullet$}] $D\in \T(\R^n)$ is the thermal diffusivity. In the example of problem~\eqref{Eq:DiffProb}, $D=D^+\mathds{1}_{\Omega^+}+D^-\mathds{1}_{\Omega^-}$ with $D^+\neq D^-$, which models the diffusion properties of two homogeneous media;
\item[\raisebox{2pt}{\tiny $\bullet$}] $\Lambda\in C(\bord,[0,+\infty])$ is the resistivity of the heat conduction by the boundary. If $\Lambda=0$, then the boundary $\bord$ acts as a thermal insulator. If $\Lambda=+\infty$, then the heat distribution $u$ is continuous across the boundary.
\end{itemize}
In the case of a Lipschitz domain, the measure $\mu$ is chosen as the Hausdorff measure $\Hcal^{(n-1)}$, which is the usual way to measure the perimeter of those domains. With that choice, the model is consistent with the physical phenomenon.

\subsection{Well-posedness of the model}\label{Sec:WP}

We begin with proving that our problem is well-posed. 

Let $u_0\!\in\! L^2(\R^n)$, $f\!\in\! L^2(]0,+\infty[,L^2(\R^n))$, $D\!\in\! \T(\R^n)$ and $\Lambda\!\in\! C(\bord,[0,+\infty])$. Problem~\eqref{Eq:Problem} is understood in term of its variational formulation, given by:
\begin{equation}\label{Eq:FormVar}
\begin{cases}
\forall t>0,\;\forall v\in V(\R^n),\quad \langle\partial_tu(t),v \rangle_{L^2(\R^n)} + a(u(t),v)=\langle f(t),v\rangle_{L^2(\R^n)},\\
u(0,\cdot)=u_0\in L^2(\R^n),
\end{cases}
\end{equation}
where
\begin{equation}\label{Eq:a}
a(u,v):=\int_{\nbord}{D\nabla u\cdot\nabla v\,\dx}+\int_{\{\Lambda<+\infty\}}{\Lambda\llbracket\Tr u\rrbracket\llbracket\Tr v\rrbracket\,\dmu}.
\end{equation}

\begin{theorem}[Well-posedness]\label{Th:WP}
Let $(\Omega,\mu)$ be a two-sided admissible domain of $\R^n$. Let $u_0\!\in\! L^2(\R^n)$, $f\!\in\! L^2(]0,+\infty[,L^2(\R^n))$, $D\!\in\! \T(\R^n)$ and $\Lambda\!\in\! C(\bord,[0,+\infty])$.
Then, there exists a unique solution $u$ which is in the space $H^1(\R_+,L^2(\R^n))\cap L^2(\R_+,V(\R^n))$ to the variational formulation (Eq.~\eqref{Eq:FormVar}).\\
For all $t\ge0$, that solution satisfies the following energy identity:
\begin{equation}\label{Eq:EnergyEstimate}
\frac{1}{2}\int_{\R^n}|u(t)|^2\,\dx+\int_0^t a(u(\tau),u(\tau))\,\mathrm{d}\tau=\frac{1}{2}\int_{\R^n}{|u_0(x)|^2\,\dx}+\int_0^t{f(\tau)\,\mathrm{d}\tau}.
\end{equation}
\end{theorem}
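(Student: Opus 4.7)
The plan is to apply the classical J.-L.~Lions theorem for parabolic variational problems in the Gelfand triple
\begin{equation*}
V(\R^n) \;\hookrightarrow\; L^2(\R^n) \;\hookrightarrow\; V(\R^n)',
\end{equation*}
whose density is immediate from $C_c^\infty(\R^n) \subset V(\R^n) \subset L^2(\R^n)$.

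First I would verify that the bilinear form $a$ from \eqref{Eq:a} is continuous on $V(\R^n) \times V(\R^n)$ and satisfies a G\r{a}rding inequality. For the volume term, continuity is immediate by Cauchy-Schwarz combined with $D\in L^\infty(\R^n)$. For the boundary term, I would invoke the trace continuity $\Tr^\pm : H^1(\Omega^\pm) \to \B(\bord) \hookrightarrow L^2(\bord,\mu)$ provided by Theorem~\ref{Trisom}, together with the fact that $\Lambda$, being continuous on the compact set $\bord$ with values in $[0,+\infty]$, is bounded on every closed sublevel set $\{\Lambda \leq M\}$: a monotone truncation argument on $\Lambda_M := \min(\Lambda,M)$ then yields continuity of the full boundary term. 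The G\r{a}rding inequality
\begin{equation*}
a(u,u) + \|u\|_{L^2(\R^n)}^2 \;\geq\; \alpha\,\|u\|_{V(\R^n)}^2
\end{equation*}
is then immediate from $\underset{\R^n}{\mathrm{ess}\inf}\,D > 0$ and $\Lambda \geq 0$, with $\alpha > 0$ depending only on the essential bounds of $D$.

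Given these two properties, the classical Lions theorem for parabolic problems with symmetric coercive bilinear forms yields, for any $u_0 \in L^2(\R^n)$ and $f \in L^2(\R_+,L^2(\R^n)) \hookrightarrow L^2(\R_+,V(\R^n)')$, a unique solution $u \in L^2(\R_+, V(\R^n)) \cap C(\R_+, L^2(\R^n))$ with $\partial_t u \in L^2(\R_+, V(\R^n)')$ satisfying \eqref{Eq:FormVar}. Because $a$ is symmetric, it defines (up to the lower-order shift used in the G\r{a}rding inequality) a self-adjoint operator $A$ on $L^2(\R^n)$ generating an analytic semigroup, so the maximal $L^2$-regularity theorem of de~Simon upgrades $\partial_t u$ to $L^2_{\mathrm{loc}}(\R_+, L^2(\R^n))$ on every bounded time interval, yielding the announced regularity.

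Finally, the energy identity \eqref{Eq:EnergyEstimate} is obtained by inserting $v = u(t)$ into \eqref{Eq:FormVar} and integrating on $[0,t]$, using the standard chain rule $\tfrac{d}{dt}\tfrac{1}{2}\|u(t)\|_{L^2}^2 = \langle \partial_t u(t), u(t)\rangle_{L^2}$ which is valid in the above regularity class. The main obstacle I anticipate is the treatment of the boundary term when $\Lambda$ is allowed to take the value $+\infty$, which requires a careful truncation-and-passage-to-the-limit argument to ensure the continuity of $a$ on $V(\R^n) \times V(\R^n)$; the upgrade from $\partial_t u \in L^2(V')$ to $\partial_t u \in L^2(L^2)$ is the other delicate step and crucially exploits the symmetry of $a$ together with the self-adjoint maximal regularity theory.
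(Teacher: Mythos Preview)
Your approach is essentially the same as the paper's: the paper simply cites the Galerkin construction in \cite[Paragraph~7.1.2]{evans_partial_2010}, records the G\r{a}rding-type estimate $a(v,v)\ge \|v\|_{V(\R^n)}^2-\|v\|_{L^2(\R^n)}^2$, and patches the solutions on $[0,T]$ together by uniqueness. Your write-up is more explicit about continuity of $a$ and about the $\Lambda=+\infty$ truncation, and it correctly isolates the passage from $\partial_t u\in L^2(V')$ to $\partial_t u\in L^2(L^2)$ as the nontrivial step---a point the paper's proof does not address at all. One caveat on that step: de~Simon's maximal $L^2$-regularity gives $\partial_t u\in L^2(0,T;L^2)$ for $u_0$ in the real-interpolation trace space $(L^2,D(A))_{1/2,2}=V(\R^n)$, not for arbitrary $u_0\in L^2(\R^n)$; for general $u_0\in L^2$ the homogeneous part $e^{-tA}u_0$ has $\partial_t(e^{-tA}u_0)=-Ae^{-tA}u_0$ with an $O(t^{-1})$ blow-up in $L^2$, so $H^1(\R_+,L^2)$ regularity up to $t=0$ cannot hold in general. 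This is a gap shared by the paper's own statement, not something your argument introduces.
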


\begin{proof}
For all $T>0$, the existence and uniqueness of the solution $u$ in the space $H^1([0,T],L^2(\R^n))\cap L^2(]0,T[,V(\R^n))$ to formulation~\eqref{Eq:FormVar} on the time interval $[0,T]$ for $T>0$ given $u_0\in L^2(\R^n)$ is proved as in~\cite[Paragraph 7.1.2]{evans_partial_2010}. Here, the bilinear form $a:V(\R^n)\times V(\R^n)\to\R$ satisfies the following coercivity estimate:
\begin{equation*}
\forall v\in V(\R^n),\quad a(v,v)\ge \|v\|^2_{V(\R^n)} - \|v\|^2_{L^2(\R^n)}.
\end{equation*}
The uniqueness of the solution on $[0,T]$ for all $T>0$ (given $u_0\in L^2(\R^n)$) proves the solutions on each time interval are compatible: if $u_1$ is a solution on $[0,T_1]$ and $u_2$ is a solution on $[0,T_2]$, then $u_1=u_2$ on $[0,T_1\wedge T_2]$. That allows to define a unique global solution $u\in H^1(\R_+,L^2(\R^n))\cap L^2(\R_+,V(\R^n))$.
\end{proof}

\begin{remark}
Since $H^1(\R_+,L^2(\R^n))\subset C(\R_+,L^2(\R^n))$, $u$ is also an element of $C(\R_+,L^2(\R^n))$.
\end{remark}

The problem being well-posed, the heat distribution is unique given the parameters of the problem and it is possible to consider the properties of the energy form related to the heat content in $\Omega^+$ and the heat transfer from $\Omega^+$ to $\Omega^-$.

\section{Convergence and optimization}\label{Sec:CV-Opt}

In this section, we consider the energy form associated to the heat content inside a domain $\Omega$ and its convergence as we approximate $\Omega$. Throughout this section, the diffusion time $T>0$ is fixed, $U$ denotes a (large) bounded Lipschitz domain and $\Lambda\in C(U,[0,+\infty])$. In what follows, we assume $\Lambda<+\infty$ on $U$ to simplify the expressions. Retrieving the general case can be done by replacing `$\bord$' with `$\bord\cap\{\Lambda<+\infty\}$' in the definition of the energy form (Eq.~\eqref{functionaltime} below). All the results stated hereafter remain valid once that modification has been made. We assume the thermal diffusivity $D$ is a constant $D^+>0$ on all the interior domains, and another constant $D^->0$ on the exterior domains, as in the case of problem~\eqref{Eq:DiffProb}. Without loss of generality, we set $D^+=1$.

\subsection{Mosco convergence of the energy forms}\label{Sec:Mosco}

If $(\Omega,\mu)$ is a two-sided admissible domain of $\R^n$ with $\Omega\subset U$, we define the energy functional $J_T(\Omega,\mu)$ on $L^2(]0,T[,L^2(U))$ by:

\begin{multline}\label{functionaltime}
J_T(\Omega, \mu)(u):=\int_0^T\!\!\int _{\Omega}| \nabla u|^2\,\dx\,\dt+\int_0^T\!\!\int_{\Omega}| u|^2\,\dx\,\dt\\
+\int_0^T\!\!\int_{\bord}\Lambda\left|\llbracket\Tr u\rrbracket\right|^2\,\dmu\,\dt.
\end{multline}

We prove the convergence of the energy form $J_T$ defined by~\eqref{functionaltime} in the sense of Mosco~\cite[Definition 2.1.1]{mosco_composite_1994} along an approximating sequence of domains, defined as follows.

\begin{definition}[Mosco convergence]\label{Def:MoscoCV}
Let $H$ be a Hilbert space. Let $(F_m)_{m\in\N}$ be a sequence of quadratic forms on $H$. $(F_m)_{m\in\N}$ is said to converge in the sense of Mosco (or $M$-converge) to a quadratic form $F$ on $H$ if the following conditions hold:
\begin{enumerate}
\item[(i)] if $(x_m)_{m\in\N}\in H^\N$ and $x\in H$ are such that $x_m\rightharpoonup x$, then
\begin{equation*}
F(x)\le \liminf_{m\to\infty}F_m(x_m);
\end{equation*}
\item[(ii)] for all $x\in H$, there exists $(x_m)_{m\in\N}\in H^\N$ such that $x_m\to x$ and
\begin{equation*}
F(x) = \lim_{m\to\infty}F_m(x_m).
\end{equation*}
\end{enumerate}
\end{definition}

To prove such convergence, if $\Omega$ is an $H^1$-extension domain, we denote by $\mathrm{E}_\Omega:H^1(\Omega)\to H^1(U)$ a linear bounded $H^1$-extension operator. If $u\in L^2(]0,T[,H^1(\Omega))$, the extension $\mathrm{E}_\Omega u$ is understood as follows:
\begin{equation}\label{Eq:SpaceExtension}
\mathrm{E}_\Omega u:t\in [0,T]\longmapsto \big(x\in U\longmapsto [\mathrm{E}_\Omega u(t)](x)\big).    
\end{equation}
In that sense, $\mathrm{E}_\Omega$ is also regarded as $L^2(]0,T[,H^1(\Omega))\to L^2(]0,T[,H^1(U))$.

\begin{theorem}[Mosco convergence of the energy forms]\label{Th:Mosco}
Let $\varepsilon > 0$. Let $(\Omega_m)_{m\in\N}$ be sequence of ($\varepsilon$,$\infty$)-domains included in $U$ and $(\mu_{m})_{m\in\N}$ be a sequence of finite Borel measures with $\mathrm{supp}\,\mu_m=\bord_m$, all satisfying~\eqref{Eqmu} for the same $d\in[n-1,n[$ and the same constant $c_d>0$. Assume there exist a domain $\Omega$ and a measure $\mu$ with $\mathrm{supp}\,\mu=\bord$ such that:
\begin{enumerate}
\item[(i)] $(\Omega_m)_{m\in\N}$ converges to $\Omega$ in the sense of characteristic functions and Hausdorff (denoted by $\Omega_m\xrightarrow[]{\mathds{1},H}\Omega$);
\item[(ii)] $(\mu_m)_{m\in\N}$ converges to $\mu$ weakly (denoted by $\mu_m\rightharpoonup\mu$), \textit{i.e.,}
\begin{equation*}
\forall \varphi\in C\big(\overline{U},\R\big),\quad \int_{\bord_m}{\varphi\,\dmu_m}\xrightarrow[m\to\infty]{}\int_{\bord}{\varphi\,\dmu}.
\end{equation*}
\end{enumerate}
Then,
\begin{equation}
\lim_{m \to \infty}J_T(\Omega_{m},\mu_{m})=J_T(\Omega,\mu) \quad\mbox{in the sense of Mosco.}
\end{equation}
\end{theorem}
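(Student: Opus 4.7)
The plan is to verify the two conditions of Mosco convergence (Definition~\ref{Def:MoscoCV}) on $H:=L^2(]0,T[,L^2(U))$. The key structural input is Jones' extension theorem: since $(\Omega_m)$ is a sequence of $(\varepsilon,\infty)$-domains with fixed $\varepsilon$, the extension operators $\mathrm{E}_{\Omega_m^\pm}:H^1(\Omega_m^\pm)\to H^1(U)$ have operator norms uniformly bounded in $m$. Combined with Theorem~\ref{Trisom} and the uniform $d$-upper regularity~\eqref{Eqmu}, this furnishes uniform trace estimates $\|\Tr^\pm_m v\|_{L^2(\bord_m,\mu_m)}\le C\|v\|_{H^1(\Omega_m^\pm)}$ with $C$ independent of $m$.

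\textbf{Liminf inequality.} Suppose $u_m\rightharpoonup u$ in $H$ and extract a subsequence realising a finite $\liminf J_T(\Omega_m,\mu_m)(u_m)$. The bulk terms give uniform bounds on $u_m^\pm$ in $L^2(]0,T[,H^1(\Omega_m^\pm))$; uniform extension then yields, along a further subsequence, $\mathrm{E}_{\Omega_m^\pm}u_m^\pm\rightharpoonup v^\pm$ in $L^2(]0,T[,H^1(U))$. Testing against $C_c^\infty(]0,T[\times\Omega^\pm)$ functions, whose supports lie in $\Omega_m^\pm$ eventually by Hausdorff convergence, identifies $v^\pm=u$ a.e.\ on $]0,T[\times\Omega^\pm$. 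Since $\mathds{1}_{\Omega_m^\pm}\to\mathds{1}_{\Omega^\pm}$ in $L^1(U)$, weak lower semi-continuity handles the gradient and $L^2$ bulk terms. For the boundary term, setting $\tilde w_m := \mathrm{E}_{\Omega_m^+}u_m^+ - \mathrm{E}_{\Omega_m^-}u_m^-\in L^2(]0,T[,H^1(U))$, its $H^1(U)$-trace on $\bord_m$ equals $\llbracket\Tr u_m\rrbracket$ (since the interior and exterior traces of an $H^1(U)$ function agree), and $\tilde w_m\rightharpoonup v^+-v^-$ in $L^2(]0,T[,H^1(U))$. Approximating $v^+-v^-$ by smooth $\varphi\in C^\infty([0,T]\times\overline U)$ then reduces the passage to applying $\mu_m\rightharpoonup\mu$ against the continuous function $\Lambda|\varphi|^2$, the remainder being controlled by the uniform trace estimate.

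\textbf{Recovery sequence.} Given $u$ with $J_T(\Omega,\mu)(u)<\infty$, I extend $u^\pm:=u|_{\Omega^\pm}$ to $\tilde u^\pm\in L^2(]0,T[,H^1(U))$ through the extension operators of the limit domain, and set
\begin{equation*}
u_m := \tilde u^+\mathds{1}_{\Omega_m}+\tilde u^-\mathds{1}_{U\setminus\overline{\Omega_m}}.
\end{equation*}
Strong convergence $u_m\to u$ in $H$ and convergence of the bulk terms follow from dominated convergence once one notes $\mathds{1}_{\Omega_m}\to\mathds{1}_\Omega$ in $L^1(U)$. For the boundary term, I approximate $\tilde u^\pm$ by smooth $\varphi^\pm\in C^\infty([0,T]\times\overline U)$ in $L^2(]0,T[,H^1(U))$, so that $\Lambda|\varphi^+-\varphi^-|^2$ is continuous on $[0,T]\times\overline U$; the weak convergence $\mu_m\rightharpoonup\mu$ yields
\begin{equation*}
\int_0^T\!\!\int_{\bord_m}\Lambda|\varphi^+-\varphi^-|^2\,\dmu_m\,\dt\longrightarrow\int_0^T\!\!\int_{\bord}\Lambda|\varphi^+-\varphi^-|^2\,\dmu\,\dt,
\end{equation*}
and a diagonal argument based on the uniform trace estimate closes the passage from $\varphi^\pm$ back to $\tilde u^\pm$.

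The principal obstacle in both halves is the boundary term: the integrand $\Lambda|\llbracket\Tr u\rrbracket|^2$ is not continuous on $\overline U$, while $\mu_m$ converges to $\mu$ only weakly against continuous functions. The resolution rests on approximating $H^1$-traces by continuous functions, to which weak convergence of measures applies directly, and controlling the error via the uniform trace bound coming from~\eqref{Eqmu} and the uniform extension property of the $(\varepsilon,\infty)$-domains.
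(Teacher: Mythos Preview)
Your proposal is correct and follows essentially the same route as the paper's proof: both arguments use the uniformly bounded Jones extension operators to pass to weak $H^1(U)$-limits, identify these with $u$ via characteristic-function convergence, invoke weak lower semicontinuity for the bulk terms, and handle the boundary jump by approximating the $H^1$-extensions by smooth functions so that weak convergence of measures applies to $\Lambda|\varphi|^2$ while the remainder is controlled by the uniform trace bound; the recovery sequence $u_m=(\mathrm{E}_\Omega u)|_{\Omega_m}+(\mathrm{E}_{U\setminus\overline\Omega}u)|_{U\setminus\overline{\Omega}_m}$ is identical. Your write-up is in fact slightly more explicit than the paper's (which largely defers to~\cite[Theorems~5 and~8]{hinz_non-lipschitz_2021}), notably in introducing $\tilde w_m$ and observing that its $H^1(U)$-trace on $\bord_m$ coincides with the jump.
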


\begin{proof}
We adapt the proof of~\cite[Theorem 8]{hinz_non-lipschitz_2021}, where the energy was time-independant and the boundary term was a trace value instead of a jump. Let $(u_m)\in L^2(]0,T[,L^2(U))^\N$ and $u\in L^2(]0,T[,L^2(U))$ such that $u_m\rightharpoonup u$. We aim at proving condition (i) from Definition~\ref{Def:MoscoCV}, that is:
\begin{equation}\label{Eq:Mosco1}
J_T(\Omega,\mu)(u)\le \underset{m\to\infty}{\lim\inf} J_T(\Omega_m,\mu_m)(u_m).
\end{equation}
Up to passing to a subsequence, we may assume:
\begin{equation*}
\liminf_{m\in\N} J_T(\Omega_m,\mu_m)(u_m)=\lim_{m\to\infty} J_T(\Omega_m,\mu_m)(u_m),
\end{equation*}
and
\begin{equation*}
\forall m\in\N,\quad u_m|_{\Omega_m}\in L^2(]0,T[,H^1(\Omega_m)),
\end{equation*}
so that the limit is finite. Then, from the definition of $J_T$, it holds:
\begin{equation}\label{Eq:Mosco1-bounded}
\sup_{m\in\N} \|u_m|_{\Omega_m}\|_{L^2(]0,T[,H^1(\Omega_m))}<+\infty. 
\end{equation}
By~\cite[Proposition 4, Remark 8]{hinz_non-lipschitz_2021}, there exists a sequence of linear bounded extension operators $(\mathrm{E}_{\Omega_m}:H^1(\Omega_m)\to H^1(U))_{m\in\N}$ and a constant $c_{\mathrm{E}}>0$ such that:
\begin{equation*}
\forall m\in\N,\quad \|\mathrm{E}_{\Omega_m}(u_m|_{\Omega_m})\|_{L^2(]0,T[,H^1(U))}\le c_{\mathrm{E}}\|u_m|_{\Omega_m}\|_{L^2(]0,T[,H^1(\Omega_m))},
\end{equation*}
hence, by~\eqref{Eq:Mosco1-bounded}, $(\mathrm{E}_{\Omega_m}(u_m|_{\Omega_m}))_{m\in\N}$ (in the sense of~\eqref{Eq:SpaceExtension}) is bounded in the Hilbert space $L^2(]0,T[,H^1(U))$.
Up to passing to a subsequence, we may assume there exists $u_\infty\in L^2(]0,T[,H^1(U))$ such that $\mathrm{E}_{\Omega_m}(u_m|_{\Omega_m})\rightharpoonup u_\infty$ in $L^2(]0,T[,H^1(\Omega_m))$. From the convergence of the domains in the sense of characteristic functions, we may also assume $\mathds{1}_{\Omega_m}\to \mathds{1}_\Omega$ a.e., hence:
\begin{align*}
\mathds{1}_{\Omega_m}\mathrm{E}_{\Omega_m}(u_m|_{\Omega_m}) &\rightharpoonup \mathds{1}_\Omega u_\infty & &\hspace{-4em}\mbox{in }L^2(]0,T[,L^2(U)),\\
\mathds{1}_{\Omega_m}\nabla\mathrm{E}_{\Omega_m}(u_m|_{\Omega_m}) &\rightharpoonup \mathds{1}_\Omega \nabla u_\infty & &\hspace{-4em}\mbox{in }L^2(]0,T[,L^2(U)^n).
\end{align*}
Since $\mathds{1}_{\Omega_m}\mathrm{E}_{\Omega_m}(u_m|_{\Omega_m})=\mathds{1}_{\Omega_m}u_m\rightharpoonup \mathds{1}_\Omega u$ in $L^2(]0,T[,L^2(U))$ (see also~\cite[Paragraph 7.1.2, Theorem 3]{evans_partial_2010} for the initial condition), we can deduce:
\begin{equation}\label{Eq:IdentificationInt}
u_\infty|_\Omega=u|_\Omega.
\end{equation}
In the same way, up to passing to a subsequence, we can assume that for all $m\in\N$, $u_m|_{U\backslash\overline{\Omega}_m}\in L^2(]0,T[,H^1(U\backslash\overline{\Omega}_m))$, where $\overline{\Omega}_m$ denotes the closure of $\Omega_m$, and obtain:
\begin{equation}\label{Eq:IdentificationExt}
\mathds{1}_{U\backslash\overline{\Omega}_m}\,u_m\rightharpoonup \mathds{1}_{U\backslash\overline{\Omega}}\,u\quad\mbox{in }L^2(]0,T[,H^1(U)).
\end{equation}

Using a similar decomposition as in the proof of~\cite[Theorem 5]{hinz_non-lipschitz_2021} (\textit{i.e.}, approximating the $H^1$-extensions of $u$ with smooth functions) yields:
\begin{align}\label{Eq:BoundaryIntCV}
\int_0^T\!\!\int_{\bord_m}{\Lambda|\!\Tr^+_{\bord_m}\!u_m|^2\,\dmu_k\,\dt} &\longrightarrow \int_0^T\!\!\int_{\bord}{\Lambda|\!\Tr^+_{\bord}\!u|^2\,\dmu\,\dt},\nonumber\\
\int_0^T\!\!\int_{\bord_m}{\Lambda|\!\Tr^-_{\bord_m}\!u_m|^2\,\dmu_k\,\dt} &\longrightarrow \int_0^T\!\!\int_{\bord}{\Lambda|\!\Tr^-_{\bord}\!u|^2\,\dmu\,\dt},\\
\int_0^T\!\!\int_{\bord_m}{\Lambda(\Tr^+_{\bord_m}\!u_m)(\Tr^-_{\bord_m}\!u_m)\,\dmu_k\,\dt} &\longrightarrow \int_0^T\!\!\int_{\bord}{\Lambda(\Tr^+_{\bord}\!u)(\Tr^-_{\bord}\!u)\,\dmu\,\dt},\nonumber
\end{align}
The limits from Eq.~\eqref{Eq:BoundaryIntCV} along with the lower semi-continuity of the weak limits from Eqs.~\eqref{Eq:IdentificationInt} and~\eqref{Eq:IdentificationExt} yield~\eqref{Eq:Mosco1}.

To complete this proof, given $u\in L^2(]0,T[,L^2(U))$, we seek a sequence $(u_m)\in L^2(]0,T[,L^2(U))^\N$ such that $u_m\to u$ and $J_T(\Omega_m,\mu_m)(u_m)\to J_T(\Omega,\mu)(u)$. We may assume $u|_{U\backslash\bord}\in L^2(]0,T[,H^1(U\backslash\bord))$ so that the limit is finite. We define
\begin{equation*}
u_m:=(\mathrm{E}_\Omega u)|_{\Omega_m}+(\mathrm{E}_{U\backslash\overline{\Omega}}u)|_{U\backslash\overline{\Omega}_m},
\end{equation*}
where $\mathrm{E}_\Omega:H^1(\Omega)\to H^1(U)$ and $\mathrm{E}_{U\backslash\overline{\Omega}}:H^1(U\backslash\overline{\Omega})\to H^1(U)$ are linear continuous extension operators. Then, by dominated convergence,
\begin{equation*}
\int_0^T\!\!\int_{\Omega_m}{\big(|u_m|^2+|\nabla u_m|^2\big)\,\dx\,\dt}\longrightarrow \int_0^T\!\!\int_\Omega{\big(|u|^2+|\nabla u|^2\big)\,\dx\,\dt}.
\end{equation*}
That limit, along with the limits of boundary integrals given by Eq.~\eqref{Eq:BoundaryIntCV} yield $J_T(\Omega_m,\mu_m)(u_m)\to J_T(\Omega,\mu)(u)$, which is condition (ii) from Definition~\ref{Def:MoscoCV}.
\end{proof}

Given the definition of convergence in the sense of Mosco and Theorem~\ref{Th:Mosco}, we can deduce the $\Gamma$-convergence of the energy forms. In particular, by~\cite[Corollary 7.20]{dal_maso_introduction_1993}, the convergence of the energy forms yields the following result on convergence of minimizers. Note that those minimizers depend notably on the diffusion time $T>0$.

\begin{corollary}[Convergence of minimizers]\label{Cor:MinCV}
Let $\varepsilon > 0$. Let $(\Omega_m)_{m\in\N}$ be sequence of ($\varepsilon$,$\infty$)-domains included in $U$ and $(\mu_{m})_{m\in\N}$ be a sequence of finite Borel measures with $\mathrm{supp}\,\mu_m=\bord_m$, all satisfying~\eqref{Eqmu} for the same $d\in[n-1,n[$ and the same constant $c_d>0$. Assume there exists a domain $\Omega$ and a measure $\mu$ with $\mathrm{supp}\,\mu=\bord$ such that:
\begin{enumerate}
\item[(i)] $(\Omega_m)_{m\in\N}$ converges to $\Omega$ in the sense of characteristic functions and Hausdorff;
\item[(ii)] $(\mu_m)_{m\in\N}$ converges to $\mu$ weakly.
\end{enumerate}
If $(u_m)_{m\in\N}\in L^2(]0,T[,L^2(U))^\N$ is such that for all $m\in\N$, $u_m$ is a minimizer of $J_T(\Omega_m,\mu_m)$, then any accumulation point $u$ of $(u_m)_{m\in\N}$ is a minimizer of $J_T(\Omega,\mu)$ and it holds:
\begin{equation*}
J_T(\Omega,\mu)(u)=\limsup_{m\to\infty} J_T(\Omega_m,\mu_m)(u_m).
\end{equation*}
If $(u_m)_{m\in\N}$ converges to $u$ in $L^2(]0,T[,L^2(U))$, then $u$ is a minimizer of $J_T(\Omega,\mu)$ and it holds:
\begin{equation*}
J_T(\Omega,\mu)(u)=\lim_{m\to\infty} J_T(\Omega_m,\mu_m)(u_m).
\end{equation*}
\end{corollary}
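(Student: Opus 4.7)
The plan is to deduce this corollary directly from the Mosco convergence established in Theorem~\ref{Th:Mosco}, using the standard variational scheme that links $\Gamma$- or Mosco-convergence of functionals with convergence of their minimizers (as in \cite[Corollary 7.20]{dal_maso_introduction_1993}). Since Mosco convergence is simultaneous $\Gamma$-convergence with respect to both the weak and strong topologies of the Hilbert space $L^2(]0,T[,L^2(U))$, a single argument covers both the weak accumulation case and the strong convergence case in the statement.

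First I would show that any accumulation point $u$ of $(u_m)_{m\in\N}$ is a minimizer of $J_T(\Omega,\mu)$. Extracting a (non-relabelled) subsequence along which $u_m \rightharpoonup u$ (or $u_m \to u$), condition (i) of Definition~\ref{Def:MoscoCV} immediately yields
\begin{equation*}
J_T(\Omega,\mu)(u)\le \liminf_{m\to\infty} J_T(\Omega_m,\mu_m)(u_m).
\end{equation*}
For any competitor $v\in L^2(]0,T[,L^2(U))$ with $J_T(\Omega,\mu)(v)<+\infty$, condition (ii) furnishes a recovery sequence $(v_m)$ with $v_m\to v$ strongly in $L^2(]0,T[,L^2(U))$ and $J_T(\Omega_m,\mu_m)(v_m)\to J_T(\Omega,\mu)(v)$. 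The minimality of $u_m$ for $J_T(\Omega_m,\mu_m)$ gives $J_T(\Omega_m,\mu_m)(u_m)\le J_T(\Omega_m,\mu_m)(v_m)$, and passing to the liminf on the left and the lim on the right yields $J_T(\Omega,\mu)(u)\le J_T(\Omega,\mu)(v)$. Since the inequality is trivial when $J_T(\Omega,\mu)(v)=+\infty$ and $v$ was arbitrary, $u$ is a minimizer.

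To get the energy identity $J_T(\Omega,\mu)(u)=\limsup_{m\to\infty}J_T(\Omega_m,\mu_m)(u_m)$, I would apply condition~(ii) with $v=u$ to produce a recovery sequence $(\tilde u_m)$ with $\tilde u_m\to u$ and $J_T(\Omega_m,\mu_m)(\tilde u_m)\to J_T(\Omega,\mu)(u)$. Minimality of $u_m$ then gives
\begin{equation*}
\limsup_{m\to\infty} J_T(\Omega_m,\mu_m)(u_m)\le \limsup_{m\to\infty} J_T(\Omega_m,\mu_m)(\tilde u_m)= J_T(\Omega,\mu)(u),
\end{equation*}
which combined with the liminf inequality above forces the full limit to exist and equal $J_T(\Omega,\mu)(u)$. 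In the case where $(u_m)$ converges strongly to $u$ in $L^2(]0,T[,L^2(U))$, the same chain of inequalities holds with $\liminf$ and $\limsup$ being actual limits along the whole sequence, giving the final $\lim$ statement.

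The main work is really already absorbed in Theorem~\ref{Th:Mosco}: the delicate points (weak extraction of $H^1$-extensions via \cite[Proposition 4, Remark 8]{hinz_non-lipschitz_2021}, and the boundary integral convergences~\eqref{Eq:BoundaryIntCV}) were handled there. The only thing to watch in the present proof is to apply condition (ii) on a competitor $v$ lying in the effective domain of $J_T(\Omega,\mu)$ so that a meaningful recovery sequence exists, which is harmless since the inequality is vacuous otherwise. No further regularity or compactness argument is required.
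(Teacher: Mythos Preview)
Your proposal is correct and follows exactly the approach the paper takes: the paper simply remarks that Mosco convergence (Theorem~\ref{Th:Mosco}) implies $\Gamma$-convergence and then cites \cite[Corollary~7.20]{dal_maso_introduction_1993}, whereas you have written out the standard liminf/recovery-sequence argument behind that citation. The only cosmetic point is that, in the accumulation-point case, the liminf inequality is obtained along the extracted subsequence while the recovery-sequence upper bound holds along the full sequence, so combining them yields exactly the $\limsup$ identity stated (rather than a full limit over the original sequence); your chain of inequalities already contains this, so no change is needed.
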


Consequently, if $(\Omega_m,\mu_m)_{m\in\N}$ is a sequence of $(\varepsilon,\infty)$-domains for some $\varepsilon>0$ converging to $(\Omega,\mu)$ in the sense of Theorem~\ref{Th:Mosco}, then energy minimizers for $(\Omega_m,\mu_m)_{m\in\N}$ allow to determine energy minimizers for the limit domain. In the next section, we prove the existence of an optimal shape in a class of $(\varepsilon,\infty)$-domains. In particular, that shape can be fractal: Corollary~\ref{Cor:MinCV} allows to approximate an energy minimizer $u$ (and the associated energy) on the optimal shape with minimizers on regular shapes.

\subsection{Shape optimization}\label{Sec:Opt}

We wish to prove the existence of a domain $\Omega$ along with a boundary measure which maximize the heat transfer from $\Omega^+$ to $\Omega^-$ in a class of domains of fixed volume. To that end, we consider the following energy functional:
\begin{equation}\label{Eq:EnergyFunctional}
\J_T(\Omega,\mu):=J_T(\Omega,\mu)(u_{(\Omega,\mu)}),
\end{equation}
where $(\Omega,\mu)$ is a two-sided admissible domain of $\R^n$, $J_T$ is defined by~\eqref{functionaltime} and $u_{(\Omega,\mu)}$ is the unique weak solution to problem~\eqref{Eq:Problem} for $(\Omega,\mu)$ (in the sense of Theorem~\ref{Th:WP}). To prove the existence of an optimal shape $(\Omega_*,\mu_*)$:
\begin{equation*}
\J_T(\Omega_*,\mu_*)=\min_{(\Omega,\mu)\in\U}\J_T(\Omega,\mu),
\end{equation*}
we consider several classes of admissible domains $\U$: a class of Lipschitz domains on the one hand (Paragraph~\ref{Subsubsec:OptLip}), and a class of uniform domains on the other hand (Paragraph~\ref{Subsubsec:OptUnif}).

\subsubsection{In a class of Lipschitz domains}\label{Subsubsec:OptLip}

We start with seeking an optimal shape in a class of Lipschitz subdomains of $U$ (which is still a large bounded Lipschitz domain). More specifically, we wish to prove the existence of an optimal domain $\Omega$ which satisfies the $\varepsilon$-cone property~\cite[Definition 2.4.1]{henrot_shape_2018} for a given $\varepsilon>0$, \textit{i.e.} for all $x\in\bord$, there exists $\xi_x\in\R^n$ unitary such that for all $y\in\overline{\Omega}\cap B_\varepsilon(x)$, it holds:
\begin{equation*}
\big\{z\in\R^n\;|\;\langle z-y,\xi_x\rangle >\cos(\varepsilon)|z-y| \; \mbox{ and }\; 0<|z-y|<\varepsilon \big\} \subset \Omega.
\end{equation*}
A bounded domain $\Omega$ of $\R^n$ is Lipschitz if and only if it satisfies the $\varepsilon$-cone property for some $\varepsilon>0$~\cite[Theorem 2.4.7]{henrot_shape_2018}.
For $\varepsilon>0$, let $\Theta(U,\varepsilon)$ be the set of subdomains of $U$ with connected boundary satisfying the $\varepsilon$-cone property.

We also require the domain $\Omega$ to be of fixed volume (with respect to $\lambda^{(n)}$), its boundary to be confined in $\overline{G}$ where $G\subsetneq U$ is a domain of $\R^n$ and such that, for some $\hat{c}>0$, it holds:
\begin{equation}
\forall r>0,\;\forall x\in\bord,\quad \Hcal^{(n-1)}(\bord\cap B_r(x))\le \hat{c}\,r^{n-1},
\end{equation}
where $\Hcal^{(n-1)}$ is the $(n-1)$-dimensional Hausdorff measure (here, with support $\bord$). Note that, by boundedness of $U$, this implies there exists $M$ depending on $\hat{c}$ such that $\Hcal^{(n-1)}(\bord)\le M$.

From those conditions, we may define the class of admissible Lipschitz domains as follows (see Figure~\ref{Fig:Admissible}).

\begin{definition}[Admissible Lipschitz domains]
Let $\varepsilon>0$, $V>0$ and $\hat{c}>0$. Let $G\subsetneq U$ be a bounded domain of $\R^n$. The class of admissible Lipschitz domains is defined by:
\begin{multline}\label{Eq:UadLip}
\U_{U\!,\varepsilon}(G, V, \hat{c}):=\big\{\Omega\in\Theta(U,\varepsilon)\;\big|\;\lambda^{(n)}(\Omega)=V,\quad\bord\subset\overline{G},\\
\mbox{and}\quad \forall r>0,\;\forall x\in\bord,\:\: \Hcal^{(n-1)}(\bord\cap B_r(x))\le \hat{c}\,r^{n-1}\big\}.
\end{multline}
\end{definition}

\begin{figure}[ht]
\centering
\includegraphics[scale=0.5]{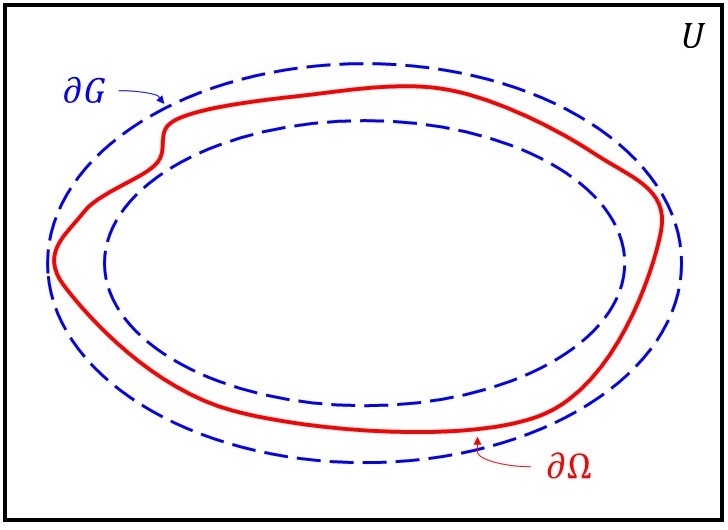}
\caption{An example of an admissible Lipschitz domain, lying inside the red boundary $\bord$. $U$ is the interior of the black rectangle. $G$ lies between the blue dashed lines $\partial G$.}
\label{Fig:Admissible}
\end{figure}

A key ingredient in optimization problems is the compactness of the class of admissible domains with respect to some domain topology. In our case, the class of admissible Lipschitz domains is compact with respect to the convergence in the sense of characteristic functions, Hausdorff and compact sets~\cite[Definitions 2.2.3, 2.2.8 and 2.2.21]{henrot_shape_2018}. However, it is not compact with respect to the weak convergence of boundary measures, in the sense that the weak limit of a sequence of Hausdorff measures is not necessarily a Hausdorff measure itself.

\begin{theorem}\label{Th:Compactness}
Let $\varepsilon>0$, $V>0$ and $\hat{c}>0$. Let $G\subsetneq U$ be a bounded domain of $\R^n$. Then, the class of admissible Lipschitz domains $\U_{U\!,\varepsilon}(G, V, \hat{c})$ is compact for the convergence in the sense of characteristic functions, Hausdorff and compact sets.
\end{theorem}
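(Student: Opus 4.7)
The plan is to leverage Chenais's classical compactness theorem, which asserts that the class $\Theta(U,\varepsilon)$ of subdomains of $U$ satisfying the uniform $\varepsilon$-cone property is compact for the three domain topologies under consideration~\cite{chenais_existence_1975}. Given a sequence $(\Omega_m)_{m\in\N}$ in $\U_{U,\varepsilon}(G, V, \hat{c})$, I extract a subsequence (still denoted $(\Omega_m)_{m\in\N}$) converging to some $\Omega\in\Theta(U,\varepsilon)$ simultaneously in the senses of characteristic functions, Hausdorff and compact sets. It remains to show that the three additional constraints defining $\U_{U,\varepsilon}(G, V, \hat{c})$ pass to the limit.

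The volume constraint is immediate from $L^1$-convergence: since all $\Omega_m\subset U$ with $U$ bounded, $\mathds{1}_{\Omega_m}\to\mathds{1}_\Omega$ in $L^1(\R^n)$ by dominated convergence, so $\lambda^{(n)}(\Omega)=\lim_m\lambda^{(n)}(\Omega_m)=V$. The boundary confinement follows from Hausdorff convergence: any $x\in\bord$ is the limit of a sequence $x_m\in\bord_m\subset\overline{G}$, so $x\in\overline{G}$ by closedness of $\overline{G}$.

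The delicate point is transferring the Ahlfors-type upper bound on the $(n-1)$-Hausdorff measure of boundary balls. For each Lipschitz domain $\Omega_m$ (and for $\Omega$, which is also Lipschitz by Chenais's theorem), the $(n-1)$-Hausdorff measure on the boundary coincides with the distributional perimeter $\operatorname{Per}(\Omega_m;\cdot)$ viewed as a Borel measure. I would fix $x\in\bord$ and $r>0$, and use Hausdorff convergence of the boundaries to produce $x_m\in\bord_m$ with $x_m\to x$; then, for every $\delta\in(0,r)$, the inclusion $B_{r-\delta}(x)\subset B_r(x_m)$ holds for $m$ large enough, so the hypothesis gives $\operatorname{Per}(\Omega_m;B_{r-\delta}(x))\le\hat c\,r^{n-1}$. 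De Giorgi's lower semi-continuity of the perimeter on open sets, combined with the $L^1$-convergence $\mathds{1}_{\Omega_m}\to\mathds{1}_\Omega$, then yields
\begin{equation*}
\operatorname{Per}(\Omega;B_{r-\delta}(x))\le\liminf_{m\to\infty}\operatorname{Per}(\Omega_m;B_{r-\delta}(x))\le\hat c\,r^{n-1},
\end{equation*}
and letting $\delta\to 0^+$ using the inner regularity of the Borel measure $\operatorname{Per}(\Omega;\cdot)$ on the open ball $B_r(x)$ concludes that $\Hcal^{(n-1)}(\bord\cap B_r(x))\le\hat c\,r^{n-1}$.

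The main obstacle is precisely this last step: Hausdorff convergence of sets does not in general preserve $(n-1)$-dimensional Hausdorff measures of boundaries (oscillations can relax in the limit), and the inclusion $B_{r-\delta}(x)\subset B_r(x_m)$ with drifting centers $x_m$ must be handled with a little care. What makes the argument work is that the \emph{upper-bound} form of the Ahlfors condition is perfectly compatible with lower semi-continuity of the perimeter; preserving a lower bound on $\Hcal^{(n-1)}$ by the same technique would fail, which is why this formulation of the admissible class is crucial.
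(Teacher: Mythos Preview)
Your argument is correct and follows the same route as the reference the paper defers to (\cite[Lemma~3.1]{magoules_optimal_2021}): Chenais compactness to obtain a limit in $\Theta(U,\varepsilon)$, then stability of the volume and confinement constraints by $L^1$ and Hausdorff convergence, and finally the Ahlfors upper bound via lower semi-continuity of the perimeter on open sets. The only point you skip is the connectedness of $\bord$, which is immediate since Hausdorff limits of connected compact sets are connected.
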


\begin{proof}
We follow the proof of~\cite[Lemma 3.1]{magoules_optimal_2021}, in dimension $n\ge 2$ and considering the Hausdorff measure $\Hcal^{(n-1)}$ instead of the Lebesgue measure $\lambda^{(n-1)}$ (which coincide for $n=2$). Such a modification can be seen as rescaling the constant $\hat{c}$.
\end{proof}

Since the class of admissible Lipschitz domains is compact with respect to domain convergence, it is possible to find optimal shapes in it for problems with homogeneous Dirichlet or Neumann boundary conditions. However, in our case, the energy form $\J_T$ also takes the boundary measure into account and we prove in the following theorem that although an optimal geometry exists in that class, its boundary is not necessarily endowed with the Hausdorff measure. Considering a boundary measure which is not the Hausdorff measure for a Lipschitz domain can be seen as measuring the perimeter of the volume in an unusual manner, in particular one which is not consistent with the physical modelling of heat diffusion problems. Note that the optimal shapes discussed in the following theorem depend on the parameters which were set at the beginning of this section, notably the transfer time $T$.

\begin{theorem}[Lipschitz optimal shape]\label{Th:ShapeOptLip}
Let $\varepsilon>0$, $V>0$ and $\hat{c}>0$. Let $G\subsetneq U$ be a bounded domain of $\R^n$. Then, there exists $\Omega_* \in\U_{U\!,\varepsilon}(G, V, \hat{c})$ and a finite $(n-1)$-dimensional positive measure $\mu_*$ on $\bord_*$ equivalent to the Hausdorff measure $\mathcal{H}^{(n-1)}$ such that, for all Borel set $\Gamma\subset\bord_*$:
\begin{equation}\label{Eq:Mu*Lip}
\mu_*(\Gamma) \ge \mathcal{H}^{(n-1)}(\Gamma),
\end{equation}
and
\begin{multline*}
J_T\big(\Omega_*, \mathcal{H}^{(n-1)}\big)(u_{(\Omega_*\!,\,\mu_*)})\le \inf_{\Omega \in\, \U_{U\!,\varepsilon}(G, V, \hat{c})} \J_T\big(\Omega, \mathcal{H}^{(n-1)}\big)
=\J_T\left(\Omega_*, \mu_*\right),
\end{multline*}
where $\J_T$ is defined by~\eqref{Eq:EnergyFunctional} and $u_{(\Omega_*\!,\,\mu_*)}$ denotes the solution to problem~\eqref{Eq:Problem} on $(\Omega_*,\mu_*)$ in the sense of Theorem~\ref{Th:WP}.
\end{theorem}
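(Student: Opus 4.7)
The plan is to construct $(\Omega_*, \mu_*)$ as a limit of a minimizing sequence, combining the compactness of the admissible Lipschitz class (Theorem~\ref{Th:Compactness}) with weak-$*$ compactness of the associated Hausdorff surface measures and the Mosco convergence of the energy forms (Theorem~\ref{Th:Mosco}); the two inequalities of the statement will then follow from monotonicity of $J_T$ in the boundary measure and lower semi-continuity of $\mathcal{H}^{(n-1)}$ under Hausdorff convergence of boundaries. I first pick a minimizing sequence $(\Omega_m)_{m\in\N} \subset \U_{U,\varepsilon}(G,V,\hat{c})$ for $\Omega \mapsto \J_T(\Omega, \mathcal{H}^{(n-1)})$. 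By Theorem~\ref{Th:Compactness} we may assume, up to a subsequence, $\Omega_m \xrightarrow[]{\mathds{1},H} \Omega_*$ with $\Omega_* \in \U_{U,\varepsilon}(G,V,\hat{c})$. Setting $\mu_m := \mathcal{H}^{(n-1)}\lfloor \partial\Omega_m$, the defining condition~\eqref{Eq:UadLip} together with a standard covering of $\overline{U}$ gives $\sup_m \mu_m(\overline{U}) < \infty$, and since all $\mu_m$ are supported in the compact set $\overline{G}$, the Banach--Alaoglu theorem yields a further subsequence along which $\mu_m \rightharpoonup \mu_*$ for some finite positive Radon measure $\mu_*$ on $\overline{U}$.

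The second step is to identify $\mu_*$. Hausdorff convergence of the boundaries gives $\operatorname{supp}\mu_* \subset \partial\Omega_*$, while passing to the limit in~\eqref{Eqmu} via the Portmanteau theorem applied to suitable cutoffs centred at points of $\partial\Omega_*$ shows that $\mu_*$ is $(n-1)$-upper regular with the same constant $\hat{c}$. Lipschitz domains satisfying the $\varepsilon$-cone property are uniform $(\varepsilon',\infty)$-domains for some $\varepsilon' = \varepsilon'(\varepsilon) > 0$, so the pairs $(\Omega_m,\mu_m)$ and $(\Omega_*,\mu_*)$ are two-sided admissible in the sense of Definition~\ref{Def:T-SAdDom} and the hypotheses of Theorem~\ref{Th:Mosco} are met. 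Mosco convergence of the underlying quadratic forms translates, via the classical correspondence between Mosco convergence and strong convergence of the associated semigroups, into strong $L^2(]0,T[,L^2(U))$-convergence $u_{(\Omega_m,\mu_m)} \to u_{(\Omega_*,\mu_*)}$ of the parabolic solutions. Combining this with the liminf condition and the recovery sequence of Definition~\ref{Def:MoscoCV} yields
\begin{equation*}
\J_T(\Omega_m, \mathcal{H}^{(n-1)}) = J_T(\Omega_m,\mu_m)(u_{(\Omega_m,\mu_m)}) \xrightarrow[m\to\infty]{} J_T(\Omega_*,\mu_*)(u_{(\Omega_*,\mu_*)}) = \J_T(\Omega_*,\mu_*),
\end{equation*}
so that $\J_T(\Omega_*,\mu_*) = \inf_{\Omega \in \U_{U,\varepsilon}(G,V,\hat{c})} \J_T(\Omega, \mathcal{H}^{(n-1)})$.

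For the remaining assertions, I would invoke the classical lower semi-continuity of $\mathcal{H}^{(n-1)}$ under Hausdorff convergence of compact boundaries~\cite[Proposition 2.3.6]{henrot_shape_2018}: for every open $W \subset U$,
\begin{equation*}
\mu_*(W) \geq \limsup_{m\to\infty} \mu_m(W) \geq \liminf_{m\to\infty} \mathcal{H}^{(n-1)}(\partial\Omega_m \cap W) \geq \mathcal{H}^{(n-1)}(\partial\Omega_* \cap W),
\end{equation*}
from which~\eqref{Eq:Mu*Lip} follows; the equivalence of $\mu_*$ with $\mathcal{H}^{(n-1)}\lfloor\partial\Omega_*$ is then a consequence of this lower bound together with the $(n-1)$-upper regularity of $\mu_*$ already established in the previous step (providing an upper bound on the Radon--Nikodym density). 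Since $\Lambda \geq 0$, by~\eqref{functionaltime} the map $\mu \mapsto J_T(\Omega_*,\mu)(u)$ is monotone for each fixed $u$, so evaluating at $u = u_{(\Omega_*,\mu_*)}$ gives $J_T(\Omega_*, \mathcal{H}^{(n-1)})(u_{(\Omega_*,\mu_*)}) \leq J_T(\Omega_*,\mu_*)(u_{(\Omega_*,\mu_*)}) = \J_T(\Omega_*,\mu_*)$. The main obstacle I expect is the passage to the limit in the boundary integrals of~\eqref{functionaltime}: as $\mu_m$ varies with $m$, the traces $\Tr^\pm u_m$ sit in varying Hilbert spaces $\B(\partial\Omega_m)$, and linking their convergence to the weak convergence $\mu_m \rightharpoonup \mu_*$ requires carefully reproducing the cross-term analysis behind~\eqref{Eq:BoundaryIntCV} for the parabolic solutions themselves, since Corollary~\ref{Cor:MinCV} concerns minimizers of $J_T$ rather than solutions of the parabolic problem.
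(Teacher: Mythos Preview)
Your skeleton---minimizing sequence, compactness of $\U_{U,\varepsilon}(G,V,\hat c)$, weak-$*$ compactness of the surface measures, then passage to the limit---is exactly the paper's. The difficulty you flag in your last paragraph is also the right one, and it is precisely there that your argument has a genuine gap.

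The step ``Mosco convergence of the underlying quadratic forms $\Rightarrow$ strong semigroup convergence $\Rightarrow$ strong $L^2(]0,T[,L^2(U))$-convergence of the parabolic solutions'' does not go through as written. Theorem~\ref{Th:Mosco} establishes Mosco convergence of the functionals $J_T(\Omega_m,\mu_m)$ on $L^2(]0,T[,L^2(U))$, but these are \emph{not} the Dirichlet forms generating the parabolic evolution: the semigroup is generated by the bilinear form $a$ of~\eqref{Eq:a}, which integrates the gradient over \emph{all} of $\R^n\setminus\partial\Omega$, whereas $J_T$ only sees the interior $\Omega$ (and is already time-integrated). The Mosco--semigroup correspondence would require Mosco convergence of the forms $a_m$ on $V(\R^n)$, which is a different (if provable by the same techniques) statement, not supplied by Theorem~\ref{Th:Mosco}. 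Moreover, even granting strong $L^2$-convergence $u_m\to u_*$, the combination ``liminf condition + recovery sequence'' of Definition~\ref{Def:MoscoCV} does not yield $J_T(\Omega_m,\mu_m)(u_m)\to J_T(\Omega_*,\mu_*)(u_*)$: condition~(i) gives only the lower bound, and condition~(ii) produces some sequence $v_m\to u_*$ with matching energies, but there is no reason $v_m=u_m$. So both the input and the output of your Mosco argument are off.

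The paper bypasses this by working directly with the solutions: the energy identity~\eqref{Eq:EnergyEstimate} bounds $(u_m)$ uniformly in $L^2(]0,T[,V(\R^n))$ and $(\partial_t u_m)$ in $L^2(]0,T[,V(\R^n)')$, the uniform extension operators push this to a bound in $L^2(]0,T[,H^1(U))$, weak limits are extracted, and the limit is identified with $u_{(\Omega_*,\mu_*)}$ by reproducing the argument leading to~\eqref{Eq:IdentificationInt}. The convergence $J_T(\Omega_m,\mathcal H^{(n-1)})(u_m)\to J_T(\Omega_*,\mu_*)(u_*)$ then follows by dominated convergence on the bulk terms and by the boundary-integral limits~\eqref{Eq:BoundaryIntCV} applied to the solutions themselves---exactly the ``cross-term analysis'' you anticipate having to redo.

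Two smaller points. First, in your chain $\mu_*(W)\ge\limsup_m\mu_m(W)$ for open $W$: Portmanteau goes the other way on open sets ($\mu_*(W)\le\liminf_m\mu_m(W)$); you need closed sets for the direction you want, and the paper simply cites~\cite[Lemma~3.1, Point~3]{magoules_optimal_2021} for~\eqref{Eq:Mu*Lip}. Second, to invoke Theorem~\ref{Th:Mosco} (or Definition~\ref{Def:T-SAdDom}) for the limit pair you need $\operatorname{supp}\mu_*=\partial\Omega_*$, not merely $\subset$; this is part of what the cited lemma provides.
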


\begin{proof}
Let $(\Omega_m)_{m\in\N}\in \U_{U\!,\varepsilon}(G, V, \hat{c})^\N$ be such that:
\begin{equation*}
\J_T(\Omega_m,\Hcal^{(n-1)})\xrightarrow[m\to\infty]{}\inf_{\Omega\in\, \U_{U\!,\varepsilon}(G, V, \hat{c})}\J_T(\Omega,\Hcal^{(n-1)})\ge0.
\end{equation*}
Then, by compactness of the class of admissible Lipschitz domains (Theorem~\ref{Th:Compactness}), we may assume (up to passing to a subsequence) there exists $\Omega_*\in\U_{U\!,\varepsilon}(G, V, \hat{c})$ such that
\begin{equation*}
\Omega_m\xrightarrow[m\to\infty]{\mathds{1},H,K}\Omega_*,
\end{equation*}
where $K$ stands for convergence in the sense of compact sets. Minor changes in the proof of~\cite[Lemma 3.1, Point 3]{magoules_optimal_2021} (\textit{i.e.}, considering subsets of the boundary instead of the whole boundary) yields the existence of $\mu_*$ supported by $\bord_*$ satisfying~\eqref{Eq:Mu*Lip}.
For $m\in\N$, let $u_m$ be the solution to problem~\eqref{Eq:Problem} on $(\Omega_m,\Hcal^{(n-1)})$ in the sense of Theorem~\ref{Th:WP}. Since the problem is well-posed (see Eq.~\eqref{Eq:EnergyEstimate}) and by uniform boundedness of the extension operators $\mathrm{E}_{\Omega_m}:H^1(\Omega_m)\to H^1(U)$, $(\mathrm{E}_{\Omega_m}u_m)_{m\in\N}$ forms a bounded sequence of $L^2(]0,T[,H^1(U))$ and $(\mathrm{E}_{\Omega_m}(\partial_t u_m))_{m\in\N}$ forms a bounded sequence of $L^2(]0,T[,H^1(U)')$. Hence, there exists $u_\infty$ such that (up to passing to a subsequence):
\begin{align*}
u_m&\xrightharpoonup[m\to\infty]{}u_\infty & &\hspace{-5em}\mbox{in }L^2(]0,T[,H^1(U)),\\
\partial_t u_m&\xrightharpoonup[m\to\infty]{}\partial_t u_\infty & &\hspace{-5em}\mbox{in }L^2(]0,T[,H^1(U)').
\end{align*}
Proceeding as in the proof of Theorem~\ref{Th:Mosco} leading to Eq.~\eqref{Eq:IdentificationInt}, we prove $u_\infty|_\Omega=u_*|_\Omega$, where $u_*$ is the weak solution to problem~\eqref{Eq:Problem} on $(\Omega_*,\mu_*)$. Up to passing to a subsequence, we may assume the convergence in the sense of characteristic functions in an a.e. pointwise convergence. Hence, by dominated convergence (for the integrals on the bulk) and using the boundary integral convergences from Eq.~\eqref{Eq:BoundaryIntCV}, we can deduce:
\begin{equation*}
\J_T(\Omega_m,\Hcal^{(n-1)})=J_T(\Omega_m,\Hcal^{(n-1)})(u_m)\xrightarrow[m\to\infty]{}J_T(\Omega_*,\mu_*)(u_*)=\J_T(\Omega_*,\mu_*).
\end{equation*}
Since $\mu_*\ge\Hcal^{(n-1)}$, it follows that:
\begin{equation*}
J_T(\Omega_*, \mathcal{H}^{(n-1)})(u_{(\Omega_*\!,\, \mu_*)}) \le \J_T\left(\Omega_*, \mu_*\right).
\end{equation*}
\end{proof}

\subsubsection{In a class of uniform domains}\label{Subsubsec:OptUnif}

As it was proved in Theorem~\ref{Th:ShapeOptLip}, the measure on the boundary of an optimal Lipschitz geometry is not necessarily the Hausdorff measure $\Hcal^{(n-1)}$, which means, in some sense, that the energy infimum can be reached outside of the class of Lipschitz admissible boundaries (implicitly endowed with the Hausdorff measure as explained before). For that matter, we extend the class of admissible domains: as of now, we wish to find an optimal domain $\Omega$ which is an $(\varepsilon,\infty)$-domain for some $\varepsilon>0$, \textit{i.e.}, for all $x,y\in\Omega$, there exists a rectifiable arc $\gamma$ drawn on $\Omega$ and joining $x$ and $y$, of length $\ell(\gamma)$ and such that:
\begin{enumerate}
\item[(i)] $\ell(\gamma)\le\frac{|x-y|}{\varepsilon}$;
\item[(ii)] for all $z\in\gamma$,\; $d(z,\bord)\ge\varepsilon\frac{|x-z||y-z|}{|x-y|}$.
\end{enumerate}
For $\varepsilon>0$, let $\Ocal(U,\varepsilon)$ be the set of $(\varepsilon,\infty)$-domains included in $U$ and with a connected boundary. We also require the boundary to be the support of a measure $\mu$ satisfying the $d$-upper regularity condition~\eqref{Eqmu} for some $d\in[n-1,n[$ and $c_d>0$, as well as the $s$-lower regularity condition for some $s\in[n-1,n[$ and $c_s>0$:
\begin{equation}\label{Eq:muLower}
\forall x\in\bord,\quad \forall r\in]0,1],\quad c_s\,r^s\le\mu(\overline{B_r(x)}),
\end{equation}
which means the Hausdorff dimension of the boundary cannot exceed $s$.
Naturally, it must hold $d\le s$ for such a measure to exist. From those conditions, we may define the class of admissible uniform domains as follows (see Figure~\ref{Fig:UnifAd}).

\begin{definition}[Admissible uniform domains]
Let $\varepsilon>0$, $V>0$, $d,s\in[n-1,n[$ with $d\le s$ and $c_d,c_s>0$. Let $G\subsetneq U$ be a bounded domain of $\R^n$. The class of admissible uniform domains is defined by:
\begin{multline}
\U_{U\!,\varepsilon}^*(G, V, d, s, c_d, c_s):=\big\{(\Omega,\mu)\;\big|\;\Omega\in\Ocal(U,\varepsilon),\quad\lambda^{(n)}(\Omega)=V,\quad\bord\subset\overline{G},\\
\mathrm{supp}\,\mu=\bord\quad\mbox{and}\quad\mu\mbox{ satisfies }\eqref{Eqmu}\mbox{ and }\eqref{Eq:muLower}\big\}.
\end{multline}
\end{definition}

\begin{figure}[ht]
    \centering
    \includegraphics[scale=0.5]{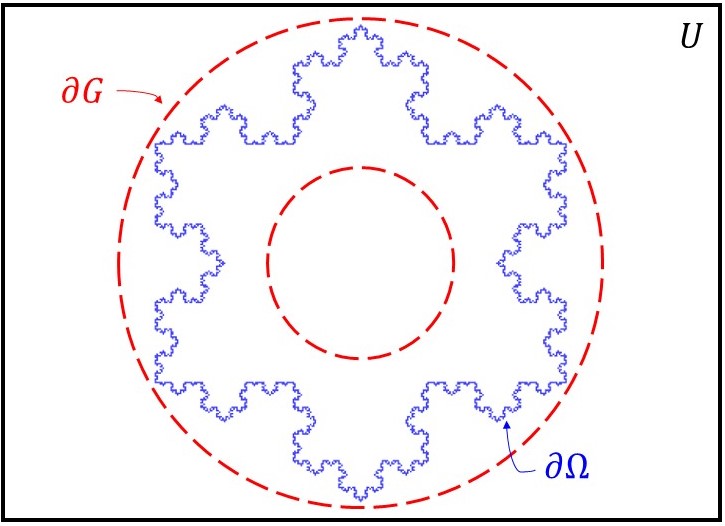}
    \caption{An example of a admissible uniform domain which is not an admissible Lipschitz domain. The domain lies inside the blue Von Koch curve (represented by a fifth generation pre-fractal {\small[source: Wikipedia]}) and its boundary is endowed with the $\frac{\ln 4}{\ln 3}$-dimensional Hausdorff measure. $U$ is the interior of the black rectangle. $G$ lies between the red dashed lines $\partial G$.}
    \label{Fig:UnifAd}
\end{figure}

It follows from~\cite[Theorem 3]{hinz_non-lipschitz_2021} that the class of admissible uniform domains is compact with respect to the convergence in the sense of characteristic functions, Hausdorff and compact sets, but also with respect to the weak convergence of boundary measures.

\begin{theorem}\label{Th:CompactUniform}
Let $\varepsilon>0$, $V>0$, $d,s\in[n-1,n[$ with $d\le s$ and $c_d,c_s>0$. Let $G\subsetneq U$ be a bounded domain of $\R^n$. Then,
\begin{enumerate}
\item[(i)] the class $\U_{U\!,\varepsilon}^*(G, V, d, s, c_d, c_s)$ of admissible uniform domains is compact in the sense of characteristic functions, Hausdorff and compact sets, as well as in the sense of weak convergence of boundary measures.

\item[(ii)] if $(\Omega_m,\mu_m)_{m \in \N}\in \U_{U\!,\varepsilon}^*(G, V, d, s, c_d, c_s)^\N$ is such that $(\mu_m)_{m\in\N}$ converges weakly, then $(\Omega_{m})_{m \in \N}$ converges in the sense of characteristic functions, Hausdorff and compact sets. 
\end{enumerate}
\end{theorem}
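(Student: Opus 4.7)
The strategy is to adapt~\cite[Theorem 3]{hinz_non-lipschitz_2021}, which establishes this kind of compactness in a very similar setting. I would proceed in three stages: extract a subsequential limit domain, extract a subsequential weak limit measure, and identify the support of the limit measure with the boundary of the limit domain.

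First, given a sequence $(\Omega_m,\mu_m)_{m\in\N}$ in $\U_{U\!,\varepsilon}^*(G, V, d, s, c_d, c_s)$, the classical compactness of $(\varepsilon,\infty)$-domains of uniformly bounded volume contained in $U$ (through Jones's construction~\cite{jones_quasiconformal_1981}) provides a subsequence converging to some $\Omega\in\Ocal(U,\varepsilon)$ in the senses of characteristic functions, Hausdorff, and compact sets. Dominated convergence yields $\lambda^{(n)}(\Omega)=V$, and Hausdorff convergence of closed subsets of $\overline{G}$ gives $\bord\subset\overline{G}$. Second, the uniform $d$-upper regularity together with $\bord_m\subset\overline{G}$ bounded yields a uniform bound on $\mu_m(\overline{U})$ (covering $\overline{G}$ by finitely many unit balls), so Prokhorov's theorem on the compact space $\overline{U}$ extracts a further subsequence with $\mu_m\rightharpoonup\mu$ for some finite Borel measure $\mu$ on $\overline{U}$. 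The upper regularity~\eqref{Eqmu} passes to $\mu$ by the portmanteau inequality $\mu(B_r(x))\le\liminf_m\mu_m(B_r(x))\le c_d r^d$, while the lower regularity follows by selecting $x_m\in\bord_m$ with $x_m\to x\in\bord$ (using Hausdorff convergence) and computing, for any $\delta>0$,
\begin{equation*}
\mu\big(\overline{B_{r+\delta}(x)}\big)\ge\limsup_m\mu_m\big(\overline{B_{r+\delta}(x)}\big)\ge\limsup_m\mu_m\big(\overline{B_r(x_m)}\big)\ge c_s r^s,
\end{equation*}
then letting $\delta\to 0^+$ by continuity of $\mu$ from above to obtain~\eqref{Eq:muLower}.

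The main obstacle, and the final step, is to show $\mathrm{supp}\,\mu=\bord$. The inclusion $\mathrm{supp}\,\mu\subset\bord$ follows because any point outside $\bord$ admits a neighbourhood eventually disjoint from $\bord_m=\mathrm{supp}\,\mu_m$, forcing $\mu$ to vanish on it by weak lower semicontinuity on open sets. The reverse inclusion is where the lower regularity~\eqref{Eq:muLower} derived above becomes essential: it shows every $x\in\bord$ carries positive $\mu$-mass in every neighbourhood, hence lies in $\mathrm{supp}\,\mu$. Without this condition, mass could escape in the weak limit and the support could shrink. This completes part (i). Part (ii) follows by the same support-identification argument: if $\mu_m\rightharpoonup\mu$ is assumed directly, then $\bord_m=\mathrm{supp}\,\mu_m$ converges in the Hausdorff sense to $\mathrm{supp}\,\mu=\bord$, and this, combined with the uniform $(\varepsilon,\infty)$-property and the fixed volume, upgrades (again via Jones's construction) to convergence of the $\Omega_m$'s in the senses of characteristic functions and compact sets.
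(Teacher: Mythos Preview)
The paper does not actually prove this theorem: it is stated as a direct consequence of~\cite[Theorem~3]{hinz_non-lipschitz_2021}, with no argument given beyond that citation. Your proposal invokes the same reference and then unpacks its content, so at the level of ``what is the proof'', you are aligned with the paper.

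Your sketch of the underlying argument is essentially correct. A couple of points could be tightened. For the lower-regularity step, you should note that the portmanteau inequality you use on closed balls requires $x\in\bord$, which you obtain only after knowing $\mathrm{supp}\,\mu\subset\bord$; the cleaner order is to first identify $\mathrm{supp}\,\mu=\bord$ (upper bound via Hausdorff convergence of the $\bord_m$, lower bound via the $s$-lower regularity of the $\mu_m$ together with Hausdorff convergence), and only then verify~\eqref{Eq:muLower} for $\mu$ on $\bord$. For part~(ii), the phrase ``upgrades via Jones's construction'' is vague; the precise mechanism is a subsequence argument: by part~(i) every subsequence of $(\Omega_m)$ has a further subsequence converging in all three senses to some $\Omega'$, and since the limit boundary is forced to equal $\mathrm{supp}\,\mu$ (which is fixed by hypothesis) and the domain is determined by its boundary under the standing assumption that domains coincide with the interior of their closure together with $\lambda^{(n)}(\Omega')=V$, all subsequential limits agree and the full sequence converges.
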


In light of Theorem~\ref{Th:CompactUniform}, the proof of Theorem~\ref{Th:ShapeOptLip} can be adapted on the class of admissible uniform domains to prove the existence of an optimal shape. Note that, once again, the optimal shapes discussed in the following theorem depend on the parameters which were set at the beginning of this section, notably the transfer time $T$.

\begin{theorem}[Uniform optimal shape]\label{Th:ShapeOptUnif}
Let $\varepsilon>0$, $V>0$, $d,s\in[n-1,n[$ with $d\le s$ and $c_d,c_s>0$. Let $G\subsetneq U$ be a bounded domain of $\R^n$. Then, there exists $(\Omega_*,\mu_*) \in\U_{U\!,\varepsilon}^*(G, V, d, s, c_d, c_s)$ such that
\begin{equation*}
\J_T(\Omega_*, \mu_*)=\min_{(\Omega,\mu) \in\, \U_{U\!,\varepsilon}^*(G, V, d, s, c_d, c_s)} \J_T(\Omega, \mu),
\end{equation*}
where $\J_T$ is defined by~\eqref{Eq:EnergyFunctional}.
\end{theorem}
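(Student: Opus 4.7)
The plan is to follow the strategy of the Lipschitz case (Theorem~\ref{Th:ShapeOptLip}), but now exploit the fact that the admissible class is compact with respect to both domain convergence \emph{and} weak convergence of boundary measures, thanks to Theorem~\ref{Th:CompactUniform}. Start from a minimizing sequence $(\Omega_m,\mu_m)_{m\in\N}$ in $\U_{U\!,\varepsilon}^*(G,V,d,s,c_d,c_s)$ for $\J_T$, so that
$$\J_T(\Omega_m,\mu_m)\xrightarrow[m\to\infty]{}\inf_{(\Omega,\mu)\in\,\U_{U\!,\varepsilon}^*(G,V,d,s,c_d,c_s)}\J_T(\Omega,\mu)\ge 0.$$
By Theorem~\ref{Th:CompactUniform}(i), we may extract a subsequence (not relabelled) and find an admissible pair $(\Omega_*,\mu_*)\in\U_{U\!,\varepsilon}^*(G,V,d,s,c_d,c_s)$ such that $\mu_m\rightharpoonup\mu_*$ weakly. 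By Theorem~\ref{Th:CompactUniform}(ii), the associated domains automatically satisfy $\Omega_m\xrightarrow{\mathds{1},H,K}\Omega_*$, where $K$ denotes convergence in the sense of compact sets. Hypotheses (i) and (ii) of Theorem~\ref{Th:Mosco} are therefore met.

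Next I would analyse the sequence of heat solutions. For each $m$, let $u_m:=u_{(\Omega_m,\mu_m)}\in H^1(\R_+,L^2(U))\cap L^2(\R_+,V(\R^n))$ be the unique weak solution to problem~\eqref{Eq:Problem} on $(\Omega_m,\mu_m)$ provided by Theorem~\ref{Th:WP}. Since all $(\Omega_m,\mu_m)$ are $(\varepsilon,\infty)$-domains with the same $\varepsilon$, the Jones extension operators $\mathrm{E}_{\Omega_m}:H^1(\Omega_m)\to H^1(U)$ and $\mathrm{E}_{U\setminus\overline\Omega_m}:H^1(U\setminus\overline\Omega_m)\to H^1(U)$ can be taken with norms bounded uniformly in $m$ (see the argument used in the proof of Theorem~\ref{Th:Mosco} based on~\cite[Proposition~4, Remark~8]{hinz_non-lipschitz_2021}). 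The energy identity~\eqref{Eq:EnergyEstimate} then yields a uniform bound on the extensions in $L^2(]0,T[,H^1(U))$, and on $\partial_t u_m$ in $L^2(]0,T[,H^1(U)')$. Up to a further subsequence extraction we obtain a weak limit $u_\infty\in L^2(]0,T[,H^1(U))\cap H^1(]0,T[,H^1(U)')$.

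Then I would identify this weak limit with the solution on the limit domain. Arguing as in the passage leading to~\eqref{Eq:IdentificationInt} and~\eqref{Eq:IdentificationExt} in the proof of Theorem~\ref{Th:Mosco}, and using the boundary integral convergences~\eqref{Eq:BoundaryIntCV} together with the weak convergence $\mu_m\rightharpoonup\mu_*$, one passes to the limit in the variational formulation~\eqref{Eq:FormVar} for $(\Omega_m,\mu_m)$ and recovers the variational formulation for $(\Omega_*,\mu_*)$ satisfied by $u_\infty$. Uniqueness in Theorem~\ref{Th:WP} forces $u_\infty=u_{(\Omega_*,\mu_*)}=:u_*$. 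At this point the convergence of minimizers statement in Corollary~\ref{Cor:MinCV} applies to the energy $J_T$ and yields
$$\J_T(\Omega_*,\mu_*)=J_T(\Omega_*,\mu_*)(u_*)\le\liminf_{m\to\infty}J_T(\Omega_m,\mu_m)(u_m)=\liminf_{m\to\infty}\J_T(\Omega_m,\mu_m).$$
Combined with the fact that $(\Omega_m,\mu_m)$ is a minimizing sequence, this shows that $(\Omega_*,\mu_*)$ attains the infimum, which is therefore a minimum.

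The main obstacle is the third step: passing to the limit in the variational formulation and identifying $u_\infty$ as $u_*$. One must simultaneously handle the moving bulk (the integrals over $\Omega_m^\pm$ which requires the characteristic-function convergence and dominated convergence on extensions) and the moving boundary term $\int_0^T\!\!\int_{\bord_m}\Lambda|\llbracket\mathrm{Tr}\,u_m\rrbracket|^2\,\dmu_m\,\dt$, where both the measure and the trace operator change with $m$. The fact that both interior and exterior Jones extensions are uniformly controlled, that $\mu_m\rightharpoonup\mu_*$, and that the interior/exterior traces in Theorem~\ref{Trisom} admit the pointwise representation~\eqref{E:pointwiseredef}--\eqref{EqDefTrmin} independent of the measure, is precisely what makes the trace-jump term pass to the limit, as encoded in~\eqref{Eq:BoundaryIntCV}.
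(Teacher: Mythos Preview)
Your proposal is correct and follows essentially the same route as the paper, which merely indicates that the proof of Theorem~\ref{Th:ShapeOptLip} is to be adapted using the compactness result of Theorem~\ref{Th:CompactUniform}. One small correction: the liminf inequality you write in the final step is not an instance of Corollary~\ref{Cor:MinCV} (that corollary concerns minimizers of $u\mapsto J_T(\Omega,\mu)(u)$, whereas your $u_m$ are heat solutions, not minimizers of the energy functional), but rather condition~(i) of the Mosco convergence established in Theorem~\ref{Th:Mosco}, applied to the weakly convergent sequence $u_m\rightharpoonup u_*$ in $L^2(]0,T[,L^2(U))$; with this reattribution the argument goes through exactly as you describe.
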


\section*{Acknowledgments}
The authors are grateful to Gabriel Singer for his preliminary work on the topic, conducted during an internship supervised by Anna Rozanova-Pierrat. The authors also thank Alexander Teplyaev and Michael Hinz for enlightening discussions on the topic. The motivation for this work originated from research conducted by Anna Rozanova-Pierrat with Bernard Sapoval and Denis Grebenkov in \'Ecole Polytechnique, France.

\bibliographystyle{alpha}
\def\refname{References}
\bibliography{BibGC.bib}

\end{document}